\theoremstyle{theorem}
\newtheorem{theorem}{Theorem}[section]
\newtheorem{lemma}[theorem]{Lemma}
\newtheorem{definition}[theorem]{Definition}
\newtheorem{proposition}[theorem]{Proposition}
\newtheorem{corollaire}[theorem]{Corollary}
\title{The Hausdorff measure of the boundary of the Brownian disk}
\author[1]{Alexis Metz-Donnadieu \\ \small{\href{mailto:ametzdonnadieu@dma.ens.fr}{ametzdonnadieu@dma.ens.fr}}}
\affil[1]{Institut de mathématiques d’Orsay, Faculté des sciences d’Orsay, Université Paris-Saclay, 91405 Orsay Cedex, France.
	D\'epartement Math\'emathiques et Applications, \'Ecole Normale Sup\'erieure, 75005 Paris, Université PSL, France.}
\date{}                     %% if you don't need date to appear
\begin{document}
	\maketitle
	\begin{abstract}
Consider the boundary $\partial \mathbb D$ of the Brownian disk $\mathbb D$ as a metric space by endowing it with the (restriction of the) metric of $\mathbb D$. We show that the uniform measure on $\partial \mathbb D$ coincides with the Hausdorff measure associated with the gauge function $h(s)=\kappa s^2\log\log(1/s)$ for some deterministic constant $\kappa>0$. We also state the analogous result for the boundary of the Brownian half-plane $\mathbb H$. This proves in	particular that the uniform measure on the boundary of the Brownian disk (resp. the Brownian half-plane) is determined by the metric on $\mathbb D$ (resp. on $\mathbb H$).
	\end{abstract}
	
	\section*{Introduction}
The free Brownian disk is a basic model of random geometry that has been extensively studied in the past years. It appears as the scaling limit of Boltzmann distributed random maps with a boundary when the boundary size goes to $+\infty$ (see in particular \cite{1, 2, 3, 10}). The limit holds for the so-called Gromov--Hausdorff--Prohorov topology. The Brownian disk is a random compact measured metric space and the first construction of (a pointed version of) this object was given by Bettinelli and Miermont in \cite{3}. A similar construction of an unpointed version of this object "seen from a boundary point" was given by Le Gall in \cite{15}. The (pointed or unpointed) free Brownian disk $\mathbb D$ is homeomorphic to the closed unit disk in the plane so that one can consider its boundary $\partial \mathbb D$. Moreover in all the standard constructions of the Brownian disk, the boundary $\partial \mathbb D$ is naturally identified with the circle $\mathbb S^1$ and there is therefore a natural notion of a uniform measure on the boundary, given by the pushforward of Lebesgue measure on $\mathbb S^1$ under this identification. In the approximation of the Brownian disk by random planar maps, the uniform measure on the boundary appears as the scaling limit of the counting measure. This indeed follows from the convergence in the GHPU topology proved in \cite{1} for triangulations and in \cite{10} for quadrangulations. It was also shown in \cite{12}  that the uniform measure on the boundary is the limit of the renormalised uniform measure on small tubular neighbourhoods of the boundary. This measure is our main object of interest in this paper. 

In \cite{16}, Le Gall showed that the metric determines the uniform measure on the Brownian sphere. More precisely, the uniform volume measure on the Brownian sphere is a Hausdorff measure for an explicit gauge function (known up to a multiplicative constant). It is therefore natural to ask if such a result is also true for the uniform measure on the boundary of the Brownian disk. The aim of this paper is to answer this question, and we will show that the uniform measure on the boundary is almost surely a multiple of the Hausdorff measure associated with the gauge function $s^2\log\log(s^{-1})$.

\begin{theorem}\label{MainResult}
	There exists a deterministic constant $\kappa\in (0, \infty)$ such that almost surely the uniform measure on the boundary $\partial \mathbb D$ of the free Brownian disk coincide with $\kappa m^h$, where $m^h$ is the Hausdorff measure on $\partial \mathbb D$ associated with the metric on $\mathbb D$ and the gauge function $h(s)= s^2\log\log\frac{1}{s}$. 
\end{theorem}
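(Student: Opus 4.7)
The plan is to adapt Le Gall's strategy from \cite{16}, which proved the analogous statement for the volume measure on the Brownian sphere. Denote by $\mu$ the uniform measure on $\partial \mathbb D$. The identification $\mu = \kappa \cdot m^h$ will follow from a standard Rogers--Taylor type density theorem once we control the upper density
\[
\Theta^{*h}(x) := \limsup_{s \to 0^+} \frac{\mu(B(x,s) \cap \partial \mathbb D)}{h(s)}
\]
by a single positive deterministic constant $\kappa^{-1}$: equality for $\mu$-almost every $x$ and a uniform inequality $\Theta^{*h}(x) \leq \kappa^{-1}$ everywhere on $\partial \mathbb D$. The gauge $h(s)=s^{2}\log\log(1/s)$ reflects the Hausdorff dimension $2$ of $\partial \mathbb D$ in its intrinsic metric, together with a law-of-the-iterated-logarithm type of fluctuation for the small-ball behaviour of $\mu$.

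The first step is to analyse the local behaviour at a $\mu$-typical boundary point. Using the invariance of the free Brownian disk under re-rooting at a uniformly sampled boundary point, one can replace $\mu$ near such a point by the boundary-length measure near the root of the Brownian half-plane $\mathbb H$, up to an explicit local coupling and scaling. Because $\mathbb H$ is exactly scale invariant, the process $s \mapsto s^{-2}\mu(B_{\mathbb H}(0,s) \cap \partial \mathbb H)$ is stationary in the logarithmic variable $\log(1/s)$. Sharp first- and second-moment estimates on this quantity (computed from the explicit construction of $\mathbb H$), together with a Borel--Cantelli / LIL argument along a dyadic sequence $s_n = 2^{-n}$, should yield
\[
\limsup_{s \to 0^+} \frac{\mu(B_{\mathbb H}(0,s) \cap \partial \mathbb H)}{s^{2}\log\log(1/s)} = \kappa^{-1} \qquad \text{a.s.},
\]
for a deterministic $\kappa \in (0, \infty)$. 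Transported back to $\mathbb D$ via re-rooting, this gives $\Theta^{*h}(x) = \kappa^{-1}$ for $\mu$-almost every $x \in \partial \mathbb D$.

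The second and more delicate step is to prove that almost surely $\Theta^{*h}(x) \leq \kappa^{-1}$ for \emph{every} $x \in \partial \mathbb D$. The $\log\log$ factor in $h$ is calibrated exactly so that at a fixed point the upper density equals $\kappa^{-1}$, so controlling all points simultaneously cannot be done by integration and requires sharp tail estimates. The plan is to cover $\partial \mathbb D$ at each scale $s_n$ by a net of cardinality polynomial in $1/s_n$, and to combine this with a quantitative tail bound of the form $\mathbb P(\mu(B(x,s) \cap \partial \mathbb D) > \lambda s^{2}) \leq e^{-c\lambda^{\alpha}}$ (or a high-moment polynomial substitute) derived from the moment estimates in $\mathbb H$. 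A Borel--Cantelli argument along $s_n$, together with the monotonicity of $s \mapsto \mu(B(x,s))$ and a standard scale-interpolation, then produces the uniform upper bound with the same constant $\kappa^{-1}$. The density theorem applied to the two matching bounds finally gives $\mu = \kappa \cdot m^h$.

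The main obstacle is to make the constants in the two bounds match: the uniform upper bound on $\Theta^{*h}$ must come out with exactly the same $\kappa^{-1}$ produced by the LIL at a typical point, which forces one to identify the sharp exponential rate governing large-deviation events of the form $\{\mu(B(x,s)) > \lambda s^{2}\}$ in the Brownian half-plane. This is the analogue, in the present boundary setting, of the sharp constant $\sqrt{2}$ in Khintchine's LIL. It is also where the fine geometry of $\partial \mathbb H$ (and in particular its conjectured description via stable subordinators/local times) will enter the argument, and where most of the technical work will be concentrated.
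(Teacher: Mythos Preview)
Your plan follows the Rogers--Taylor template but diverges from the paper's argument at the decisive point, and the divergence creates a real obstacle. You propose to pin down $\kappa$ by first proving a \emph{sharp} law of the iterated logarithm $\Theta^{*h}(x)=\kappa^{-1}$ at a typical boundary point and then upgrading this to a uniform bound $\Theta^{*h}(x)\le\kappa^{-1}$ at \emph{every} $x$, with the same constant. Neither step is carried out in the paper, and for good reason: the sharp exponential rate governing $\mathbb P(\mu(B(x,s)\cap\partial\mathbb D)>\lambda s^{2})$ that you would need is not identified anywhere, and even if both steps succeeded, the density theorem as stated in the paper (Lemma~\ref{Hausdorffmeasurescontrole}) only gives $M_1\kappa\,\lambda\le m^h_{\mathbb D}\le M_2\kappa\,\lambda$ with universal constants $M_1,M_2$ that need not equal $1$. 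So matching the LIL constant in your two bounds would still not close the argument. Incidentally, the direction you flag as ``more delicate'' (the uniform upper bound on $\Theta^{*h}$) is the \emph{easy} one in the paper; the work in Section~3 goes into the opposite inequality, showing that $\Theta^{*h}\ge\gamma/12$ off an $m^h$-null set.

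The paper bypasses the sharp-constant problem entirely. It first uses only soft estimates---an upper bound on $\limsup_r \lambda(B^{\mathbb D}_r(0))/h(r)$ at the root, and the lower bound $\Theta^{*h}\ge\gamma/12$ off an $m^h$-null set coming from the Bessel estimates of Section~2---to show that $m^h_{\mathbb D}$ and $\lambda$ are mutually absolutely continuous with Radon--Nikodym derivative bounded between two deterministic constants. The key idea you are missing then replaces your constant-matching: a zero--one law for the germ $\sigma$-algebra at the root of the Brownian half-plane (Lemma~\ref{Trivialalgebra}, Proposition~\ref{deterBrownPlane}) forces $\limsup_{\epsilon\to 0}m^h_{\mathbb D}([0,\epsilon])/\epsilon$ to be a deterministic constant $\kappa$. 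Since the derivative $\psi'(x)=\mathrm d m^h_{\mathbb D}/\mathrm d\lambda(x)$ exists $\lambda$-a.e.\ and, by rerooting invariance (Lemma~\ref{rerooting}), has the same law as $\psi'(0)=\kappa$ for every fixed $x$, one obtains $\psi'\equiv\kappa$ a.e.\ and hence $m^h_{\mathbb D}=\kappa\lambda$. The constant $\kappa$ is never computed; its determinism comes from measurability alone, which is precisely what makes the proof short.
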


The Brownian half-plane $\mathbb H$ is another important model of Brownian geometry which was introduced by Caraceni and Curien in \cite{5} and studied in greater detail by Gwynne and Miller in \cite{11} and by Baur, Miermont and Ray in \cite{2}. The Brownian half-plane can be viewed as the tangent cone in distribution to the Brownian disk at a distinguished point of the boundary (see Chapter $8$ of \cite{4} for the definition of tangent cones to metric spaces), and it also appears as the scaling limit of the uniform infinite half-planar quadrangulation as it was shown in \cite{11}. Similarly as for the Brownian disk, one can defined a uniform measure on the boundary $\partial \mathbb H$ of the Brownian half-plane. Moreover from the coupling described in Section $4.2$ of \cite{11} we know that we can couple the Brownian disk and the Brownian half-plane in such a way that small enough balls centred at the distinguished point on the boundary of the Brownian disk, respectively on the boundary of the Brownian half-plane, are isometric. Therefore, the result of Theorem \ref{MainResult} easily yields an analogous statement for the boundary of the Brownian half-plane.

\begin{corollaire}
	With the same constant $\kappa$ as in Theorem \ref{MainResult}, the uniform measure on $\partial \mathbb H$ coincides with $\kappa$ times the Hausdorff measure associated with the gauge function $h$.
\end{corollaire}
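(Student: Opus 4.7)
\bigskip
\noindent\textbf{Proof plan.}
The strategy is to deduce the corollary from Theorem~\ref{MainResult} by transporting the identity of measures along the Gwynne--Miller coupling, and then to extend the resulting local statement to all of $\partial\mathbb{H}$ via the scale invariance of the Brownian half-plane.

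\emph{Step 1 (local transfer).} Let $x_{0}\in\partial\mathbb{D}$ and $y_{0}\in\partial\mathbb{H}$ denote the distinguished boundary points. The coupling of Section~$4.2$ in \cite{11} provides, on a common probability space, an almost-surely positive random radius $\rho$ together with an isometry between $B_{\mathbb{D}}(x_{0},\rho)$ and $B_{\mathbb{H}}(y_{0},\rho)$ that matches the two boundary arcs and identifies the restrictions of the two uniform boundary measures to them. Since the Hausdorff measure $m^{h}$ is an intrinsic invariant of the metric, the same isometry also identifies $m^{h}$ restricted to $\partial\mathbb{D}\cap B_{\mathbb{D}}(x_{0},\rho)$ with $m^{h}$ restricted to $\partial\mathbb{H}\cap B_{\mathbb{H}}(y_{0},\rho)$. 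Applying Theorem~\ref{MainResult} to $\mathbb{D}$ and restricting yields, almost surely,
\[
\mu_{\mathbb{H}}|_{\partial\mathbb{H}\cap B_{\mathbb{H}}(y_{0},\rho)} \;=\; \kappa\, m^{h}|_{\partial\mathbb{H}\cap B_{\mathbb{H}}(y_{0},\rho)}.
\]

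\emph{Step 2 (global extension via scaling).} The Brownian half-plane is scale invariant: for every $\lambda>0$, the rescaled triple $(\mathbb{H},\lambda\, d_{\mathbb{H}},\lambda^{2}\mu_{\mathbb{H}})$ has the same distribution as $(\mathbb{H},d_{\mathbb{H}},\mu_{\mathbb{H}})$. Because the leading behaviour of the gauge is $h(s)\sim s^{2}$ and the $\log\log$ factor contributes only a vanishing multiplicative correction as $s\to 0$, the Hausdorff measure satisfies $m^{h}_{\lambda d}=\lambda^{2}\, m^{h}_{d}$ as well, so the equality of $\mu_{\mathbb{H}}$ and $\kappa\,m^{h}$ on a metric ball around $y_{0}$ is a scale-invariant event. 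Consequently, the probability
\[
p(R):=\mathbb{P}\!\left(\mu_{\mathbb{H}}=\kappa\, m^{h}\ \text{on}\ B_{\mathbb{H}}(y_{0},R)\cap\partial\mathbb{H}\right)
\]
does not depend on $R>0$. Since this probability tends to $1$ as $R\downarrow 0$ by Step~1, one has $p\equiv 1$, and taking a countable exhaustion $R_{n}\to\infty$ gives the announced almost sure equality on all of $\partial\mathbb{H}$.

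\emph{Main obstacle.} The only delicate point is verifying the scaling behaviour of $m^{h}$ in the presence of the $\log\log$ correction in the gauge. This is harmless because Hausdorff measures are defined through covers of vanishing diameter, and $\log\log(1/(\lambda s))/\log\log(1/s)\to 1$ as $s\to 0$, so only the $s^{2}$ leading term of $h$ enters the equivariance of $m^{h}$ under rescaling of the metric. Granted this, the corollary follows directly from Theorem~\ref{MainResult} and the isometry of small balls provided by the Gwynne--Miller coupling.
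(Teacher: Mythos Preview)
Your approach aligns with the paper's own one-line justification, which simply says that the Gwynne--Miller coupling of small balls ``easily yields'' the corollary; the paper gives no further detail. Your Step~1 is exactly this transfer, and your Step~2 supplies the missing global extension, which the paper leaves implicit. The scaling computation for the Hausdorff measure is correct: since $h(\lambda s)/(\lambda^{2}h(s))\to 1$ uniformly as $s\to 0$, one gets $m^{h}_{\lambda d}=\lambda^{2}m^{h}_{d}$ exactly, and combined with the boundary-length scaling $\mu_{\partial\mathbb H}\mapsto\lambda^{2}\mu_{\partial\mathbb H}$ this makes $p(R)$ constant in $R$ as you claim.

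One point deserves a little more care. In Step~1 you need two facts beyond the mere isometry of metric balls: (i) the isometry carries $\partial\mathbb D\cap B_{\mathbb D}(x_{0},\rho)$ onto $\partial\mathbb H\cap B_{\mathbb H}(y_{0},\rho)$, and (ii) it intertwines the two uniform boundary measures. Both hold because the coupling in \cite{11} is a coupling of \emph{curve-decorated} measured metric spaces in the local GHPU sense, so the boundary parametrisations (hence the pushforwards of Lebesgue measure) agree on the coupled neighbourhood; but this should be stated explicitly rather than derived from ``balls are isometric'' alone. With that clarification your argument is complete.
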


The paper is organized as follows, section $2$ gives some general preliminaries about the Brownian disk and in particular recalls the definitions of the random measures that we consider in this paper. In section $3$, we give some estimates on Bessel processes that play a central role in the proof of the Theorem \ref{MainResult}. Finally section $4$ gives the proof of Theorem \ref{MainResult}.
	
	\section{Preliminaries}
	
This section is dedicated to recalling the constructions of the objects and concepts that we consider in this paper. We start by recalling the basic construction and properties of the Brownian snake in section 1.1. In sections 1.2 and 1.3, we give the construction of the Brownian disk and the Brownian half-plane seen from a boundary point as given in \cite{15}. Section 2.4 provides some general preliminary results about Hausdorff measures. 
	\subsection{The Brownian snake} 
Let $\mathcal W_0$ be the set of all finite paths in $\mathbb R$ started at $0$, namely the set of all continuous functions $w:[0, \zeta(w)]\to \mathbb R$, where $\zeta(w)$ is a non-negative real number called the lifetime of $w$, and such that $w(0)=0$. This set is endowed with the distance:
\begin{equation*}
	d_{\mathcal W_0}(w, w')=|\zeta(w)-\zeta(w')|+\sup_{t\geq 0} |w(t\wedge \zeta(w))-w'(t\wedge \zeta(w'))|.
\end{equation*}
\begin{definition}
	A snake trajectory $\omega$ starting at $0$ is a continuous function $s\mapsto \omega_s$ from $\mathbb R_+$ into $\mathcal W_0$, with $\zeta(\omega_0)=0$ and such that:
	\begin{itemize}
		\item[\textbullet] the lifetime $\sigma(\omega):=\sup\{s\geq 0, \ \zeta(\omega_s)> 0\}$ of $\omega$ is finite,
		\item[\textbullet] for all $0\leq s\leq s'$, $\omega_s(t)=\omega_{s'}(t)$ whenever $t\leq \min_{u\in [s, s']}\zeta(\omega_u)$.
	\end{itemize}
	We will write $\mathcal S_0$ for the set of all snake trajectories starting at $0$, endowed with the distance:
	\begin{equation*}
		d_{\mathcal S_0}(\omega, \omega')=|\sigma(\omega)-\sigma(\omega')|+\sup_{s\geq 0}d_{\mathcal W_0}(\omega_s,\omega'_s).
	\end{equation*}
\end{definition}
For $\omega\in \mathcal S_0$ and $s\geq 0$, write $\zeta_\omega(s)= \zeta(\omega_s)$ and $\hat \omega(s):= \omega_s(\zeta_\omega(s))$. One easily shows that $\omega$ is entirely specified given the two functions $\zeta_\omega:\mathbb R_+\to \mathbb R_+$ and $\hat \omega:\mathbb R_+\to \mathbb R$ which are respectively called the \emph{lifetime} and \emph{snake head} functions associated to $\omega$. In the following, we will also write $W_*(\omega)$ for the minimal label taken by the snake trajectory $\omega$:
\begin{equation*}
	W_*(\omega)=\min_{s\geq 0}\min_{t\in [0, \zeta_\omega(s)]} \omega_s(t)=\min_{s\in [0, \sigma(\omega)]}\hat \omega(s).
\end{equation*}
To each snake trajectory $\omega$ one can associate a labelled $\mathbb R$-tree $T_\omega$ called the \emph{genealogical tree of trajectories of $\omega$}. It is the labelled metric space obtained by considering the pseudo-distance $d_\omega$ on $[0, \sigma(\omega)]$ defined for all $s, t\in [0, \sigma(\omega)]$ by:
\begin{equation*}
	d_\omega(s, t)= \zeta_\omega(s)+\zeta_\omega(t)-2\min_{u\in [s, t]}\zeta_\omega(u),
\end{equation*}
and then constructing $T_\omega$ as the quotient space of $[0, \sigma(\omega)]$ for the equivalence relation $s\sim t$ iff $d_\omega(s, t) =0$ and endowing this quotient with the metric induced by $d_\omega$. 
Let $p_{\omega}:[0, \sigma(\omega)]\to T_\omega$ be the canonical projection and let $\rho_\omega:=p_\omega(0)$ be a distinguished point in $T_\omega$ that we call the \emph{root} of $T_\omega$. Two points having the same image by $p_\omega$ also have the same image by $\hat \omega$, and we therefore have a natural labelling $\ell_\omega:T_\omega\to \mathbb R$ induced by $\hat\omega$ and characterized by $\hat \omega =\ell_\omega\circ p_\omega$.
\begin{definition}
	The Brownian snake excursion measure is the unique measure $\mathbb N_0$ on $\mathcal S_0$ such that the pushforward of $\mathbb N_0$ under the function $\omega\mapsto \zeta_\omega$ is the Itô measure of positive Brownian excursions, normalized so that \[
	\mathbb N_0\left(\sup_{s\geq 0}\zeta_\omega(s)\geq \varepsilon\right)=\frac{1}{2\varepsilon},\]
	and such that conditionally on $\zeta_\omega$, the function $\hat \omega:\mathbb R_+\to \mathbb R$ is a centred Gaussian process with continuous sample paths and covariance kernel $K(s, s ')=\min_{u\in[s, s']}\zeta_\omega(u)$.
\end{definition}
	\subsection{The free Brownian disk seen from the boundary}
In this section, we recall the construction given in \cite{15} of the free Brownian disk of perimeter $1$ seen from a boundary point. Let $(R_t)_{t\in [0, 1]}$ be a $5$-dimensional Bessel bridge of length $1$ starting and ending at $0$ and, conditionally on $R$, let $\mathcal N_{\mathbb D}=\sum_{i\in I}\delta_{(t_i, \omega^i)}$ be a Poisson point measure on $[0, 1]\times \mathcal S$ of intensity $2\mathbbm 1_{\{W_*(\omega)\geq -\sqrt 3 R_t\}}\mathrm dt\mathbb N_{0}(\mathrm d\omega)$.
Consider the set:
\begin{equation}
	\label{ID}
	\mathcal I:= \left([0, 1]\cup \bigcup_{i\in I} T_{\omega^i}\right)\Big/\sim,
\end{equation} where $\sim$ is the equivalence relation identifying $\rho_{\omega^i}$ and $t_i$ for all $i\in I$ (and no other pair of points). We endow this set with the maximal distance $d_{\mathcal I}$ whose respective restrictions to $T_{\omega^i}$ (for every $i\in I$) and to $[0, 1]$ coincide with $d_{\omega^i}$ and with the standard distance on $[0, 1]$ respectively. The distance between two points $x\in T_{\omega^i}$ and $y\in T_{\omega^j}$, where $i\neq j$ is more explicitly given by 
\[d_{\mathcal I}(x, y):= d_{\omega^i}(x,\rho_{\omega^i})+|t_i-t_j|+ d_{\omega^j}(y, \rho_{\omega^ j}).\]
The metric space $(\mathcal I, d_{\mathcal I})$ is compact, and we define a labelling $\ell:\mathcal I\to \mathbb R$ by:
\begin{equation}
	\label{labelFunct}
	\ell(x)=\left\{\begin{array}{l}
		\ell_{\omega^i}(x)+\sqrt 3 R_{t_i} \ \ \ \ \text{ if $x\in T_{\omega^i}$, for $i\in I$,}\\
		\sqrt 3R_x \ \ \ \ \ \ \ \ \ \ \ \ \ \ \ \ \text{ if $x\in [0, 1]$.}
	\end{array}\right.
\end{equation}
One can show that the quantity $\Sigma:=\sum_{i\in I}\sigma(\omega^i)$ is almost surely finite, and it is possible to concatenate the functions $p_{\omega^i}$ in the order prescribed by the points $(t_i)_i$ to obtain a projection $\pi:[0, \Sigma]\to \mathcal I$. Formally, to define $\pi$, let $\mu=\sum_{i\in I}\sigma(\omega^i)\delta_{t_i}$ be the point measure on $[0, 1]$ giving weight $ \sigma(\omega^i)$ to $t_i$, for every $i\in I$, and consider the left-continuous inverse $\mu^{-1}$ of its cumulative distribution function, given for every $s\in [0, \Sigma]$ by:
\begin{equation*}
	\mu^{-1}(s):=\inf\{t \ : \ \mu[0, t]\geq s\}.
\end{equation*}
For $s\in [0, \Sigma]$ we then simply set $\pi(s)=p_{\omega^{i}}(s-\mu[0, t_i))$ if $\mu^{-1}(s)=t_i$ for some $i\in I$ and $\pi(s)=\mu^{-1}(s)$ otherwise.
	
This projection $\pi$ allows us to consider intervals on $\mathcal I$. For $u, v\in \mathcal I$, there exists a minimal interval $[s, t]$ in $[0, \Sigma]$ such that $\pi(s)=u$ and $\pi (t)=v$, where by convention $[s, t]=[s, \Sigma]\cup[0, t]$ if $s> t$. We then write $[|u, v|]$ for the subset of $\mathcal I$ defined by $\{\pi(b), \ b\in [s, t]\}$ (note that we have $[| u, v|]\neq [|v, u|]$ in general). We set:
\begin{equation}
	\label{distance0D}
	\forall u, v\in \mathcal I, \ \ D^\circ(u, v):=\ell(u)+\ell(v)-2\max\left(\min_{w\in [|u, v|]}\ell(w), \min_{w\in [|v, u|]}\ell(w)\right),
\end{equation} 
and the pseudo metric $D$ is defined for all $u, v\in \mathcal I$ by:
\begin{equation}
	\label{distanceD}
	D(u, v):= \inf_{u=u_0,u_1, \cdots, u_p=v}\sum_{j=0}^{p-1}D^\circ(u_j, u_{j+1}),
\end{equation}
where the infimum is taken over all choices of the integer $p\geq 1$ and elements $u_0,\cdots, u_p\in \mathcal I$ such that $u_0= u$ and $u_p=v$. By taking the quotient of $\mathcal I$ for the equivalence relation $s\sim t\Leftrightarrow D(s, t)=0$ we get a compact metric set $\mathbb D$ endowed with the distance induced by $D$ (in the following we will also write $D$ for the metric induced on $\mathbb D$, with a slight abuse of notation). We write $\Pi:\mathcal I\to \mathbb D$ for the canonical projection and $\Gamma:[0, 1]\to \mathbb D$ for the restriction of $\Pi$ to the segment $[0, 1]$ viewed as a subset of $\mathcal I$ as in (\ref{ID}). Note that $\Gamma(0)=\Gamma(1)$ since it is clear from our definitions that $D(0, 1)=0$. 
\begin{definition}
	The free Brownian disk of perimeter $1$ is the (random) curve-decorated compact measured metric space $(\mathbb D, D, \mathbf V, \Gamma)$ where $(\mathbb D, D)$ and $\Gamma$ are defined as above and where $\mathbf V$ is a Borel measure on $\mathbb D$ called the volume measure and defined as the pushforward of the Lebesgue measure on $[0, \Sigma]$ under the mapping $\Pi\circ \pi:[0, \Sigma]\to \mathbb D$. The subset of $\mathbb D$ defined by $\partial \mathbb D:=\Gamma([0, 1])$ is called the boundary of $\mathbb D$. It is identified with $[0, 1]/_{0\sim 1}$ through the mapping $\Gamma$.
\end{definition}
The space of all curve-decorated compact measured metric spaces (modulo isometries) is equipped with the Gromov--Hausdorff--Prohorov uniform metric (GHPU for short). With the exception of the proof of Lemma \ref{rerooting} we will not need the precise definition and properties of this metric and we refer to  \cite{11} for a detailed discussion of curve-decorated compact measured metric spaces and of the GHPU topology.

For any two points $u,v\in \mathcal I$ such that $\Pi(u)=\Pi(v)$ we have $\ell(u)= \ell(v)$. Thus $\ell$ induces a function $\mathbb D\to \mathbb R$ for which we keep the same notation $\ell$. Crucially one can verify from formulas (\ref{distance0D}) and (\ref{distanceD}) that for all $x\in \mathbb D$ we have 
\begin{equation}
	\label{DistLabeling}\ell(x)=D(\Gamma(0), x).
\end{equation}
One important aspect of this construction that we will use repeatedly in this paper is the fact that the root point $\Gamma(0)$ is "uniform" on $\partial \mathbb D$. Namely, fix some $u\in [0, 1]$. We define a new curve $\Gamma^{[u]}$ in $\mathbb D$ corresponding to $\Gamma$ "re-rooted" at $\Gamma(u)$ by setting for all $t\in [0, 1]$:
\begin{equation*}
	\Gamma^{[u]}(t)=\left\{\begin{array}{l}\Gamma(t+u) \ \ \ \ \ \text{ if } t+u\leq 1 \\ \Gamma(t+u-1) \text{ otherwise}
	\end{array}\right.. 
\end{equation*} 
\begin{lemma}\label{rerooting}
	For every fixed $u\in [0, 1]$, $(\mathbb D, D,  \mathbf V, \Gamma)$ has the same law as $(\mathbb D, D, \mathbf V, \Gamma^{[u]})$, where both $(\mathbb D, D,  \mathbf V, \Gamma)$ and $(\mathbb D, D, \mathbf V, \Gamma^{[u]})$ are seen as random elements of the set of all curve-decorated  compact measured metric spaces endowed with the GHPU topology. 
\end{lemma}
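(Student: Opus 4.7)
The plan is to derive the re-rooting invariance from the analogous (and essentially trivial) invariance at the discrete level via a scaling-limit argument; this is precisely why the GHPU topology is needed at this point. I would consider a sequence of Boltzmann-distributed random quadrangulations (or triangulations) with a boundary of perimeter $L_n\to +\infty$, each equipped with its graph distance rescaled by $L_n^{-1/2}$, its counting measure on vertices rescaled by $L_n^{-2}$, and the boundary contour curve $\Gamma_n:[0,1]\to M_n$ obtained by parametrizing the boundary so that $\Gamma_n(k/L_n)$ is the $k$-th boundary edge visited from the root. By \cite{1,10}, these converge in distribution for the GHPU topology to the free Brownian disk $(\mathbb D,D,\mathbf V,\Gamma)$ of perimeter $1$.

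At the discrete level, the Boltzmann probability measure on rooted maps of fixed boundary length is invariant under cyclic shifts of the root along the boundary: for any deterministic $k\in\{0,\ldots,L_n-1\}$, re-rooting $M_n$ at the $k$-th boundary edge yields a map with the same distribution. Applying this with $k_n:=\lfloor L_n u\rfloor$, so that $k_n/L_n\to u$, we get
\[
(M_n, D_n, \mathbf V_n, \Gamma_n)\ \stackrel{d}{=}\ (M_n, D_n, \mathbf V_n, \Gamma_n^{[k_n/L_n]}).
\]
The conclusion will follow by passing to the joint limit on both sides, once we check that the re-rooting operation $(X,d,\mu,\gamma,t)\mapsto (X,d,\mu,\gamma^{[t]})$ is continuous at the Brownian disk, in the sense that if $(X_n,d_n,\mu_n,\gamma_n)\to (X,d,\mu,\gamma)$ in GHPU and $t_n\to t\in[0,1]$, then $(X_n,d_n,\mu_n,\gamma_n^{[t_n]})\to (X,d,\mu,\gamma^{[t]})$.

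The main obstacle is this continuity statement, which is not completely formal because the re-rooted curve $\gamma^{[t]}$ has a potential discontinuity at $1-t$ coming from the identification $t\mapsto t\ (\mathrm{mod}\,1)$. However, by the construction recalled above, $\Gamma(0)=\Gamma(1)$, so $\Gamma$ factors through the circle $[0,1]/_{0\sim 1}$ on which it is continuous (hence uniformly continuous). Combining uniform continuity of $\Gamma$ with the definition of the GHPU distance (one embeds $X_n$ and $X$ isometrically into a common metric space, and must match the curves in sup-norm), an elementary $3\varepsilon$-argument shows that the re-rooted curves $\Gamma_n^{[t_n]}$ still converge uniformly to $\Gamma^{[t]}$; the measures and metrics are unaffected by the operation. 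Once this is established, passing to the limit in the identity in law above yields the lemma. For definiteness one may wish to first invoke the Skorokhod representation theorem to work with almost-sure convergence on a single probability space.
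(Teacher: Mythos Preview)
Your proposal is correct and follows essentially the same approach as the paper: both argue by passing the trivial discrete re-rooting invariance of Boltzmann maps with a boundary to the limit via the GHPU convergence of \cite{1,10}, choosing $k_n=\lfloor L_n u\rfloor$ (the paper's $u_L=L^{-1}\lfloor uL\rfloor$). The paper simply asserts that ``one easily sees from properties of the GHPU topology'' that the re-rooted curves converge to $\Gamma^{[u]}$, whereas you spell out the continuity argument (using $\Gamma(0)=\Gamma(1)$ to handle the wrap-around) and suggest Skorokhod representation; this extra detail is welcome but does not change the strategy.
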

\begin{proof}[Proof of Lemma \ref{rerooting}] We can deduce this statement from the characterization of the Brownian disk as the scaling limit of Boltzmann triangulations for the GHPU topology. We do not present the details in full length and just sketch the main elements of the proof (see Lemma 17 in \cite{3} and Theorem 8.1 in \cite{13} for similar results and more detailed proofs). For every integer $L\geq 1$, let $\mathcal T_L$ be a Boltzmann distributed rooted triangulation with a simple boundary $\partial \mathcal T_L$ of size $L$ (we refer to \cite{1} for a precise definition of this object). Let $\Delta_L$ stand for the graph distance on the vertex set $V(\mathcal T_L)$ of $\mathcal T_L$ and write $d_L= \frac{3}{2}L^{-1/2}\Delta_L$. We also write $\nu_L$ for the counting measure on the set of inner vertices $V(\mathcal T_L)\setminus \partial \mathcal T_L$ scaled by the factor $\frac{3}{4}L^{-2}$. We finally consider the “boundary path” $\Theta_L=(\Theta_L(k))_{0\leq k\leq L}$, which is obtained by letting $\Theta_L(0)=\Theta_L(L)$ be the root vertex of $\mathcal T_L$ and $\Theta_L(1), \cdots, \Theta_L(L-1)$ be the points of $\partial \mathcal T_L$ enumerated in clockwise order from $\Theta_L(0)$. We also set $\hat \Theta_L(t)=\Theta_L(\lfloor Lt\rfloor )$ for $t\in [0, 1]$. According to Theorem 1.1 of \cite{1}:
	\begin{equation}
		\label{ConvergenceTriangulations}
		(V(\mathcal T_L), \Delta_L, \nu_L, \hat \Theta_L) \xrightarrow[L\to\infty]{} (\mathbb D, D, \mathbf V, \Gamma),
	\end{equation}
	where the convergence holds in distribution for the GHPU topology (there is a slight technical difficulty related to the fact that $\hat \Theta_L$ is not continuous but this can be easily fixed, see Remark $1.2$ in \cite{10}).
	Fix $u\in [0, 1]$ and let $u_L=L^{-1}\lfloor uL\rfloor$. It is clear from the definition of a Boltzmann triangulation that $(\mathcal T_L, \Delta_L, \nu_L,  \hat \Theta_L )$ has the same law as $(\mathcal T_L, \Delta_L, \nu_L,  \hat\Theta_L^{(u_L)})$ where we wrote for every $t\in [0, 1]$: \[\hat \Theta_L^{(u_L)}(t)=\hat \Theta_L(u_L+t-\lfloor u_L+t\rfloor).\] Then, one easily sees from properties of the GHPU topology that (\ref{ConvergenceTriangulations}) implies that $(\mathcal T_L, \Delta_L, \nu_L,  \hat \Theta_L^{(u_L)})$ also converges in distribution when $L\to \infty$ towards $(\mathbb D, D, \mathbf V,  \Gamma^{[u]})$. By identifying the limiting laws, this imply in particular that $(\mathbb D, D, \mathbf V,\Gamma)$ must have the same law as $(\mathbb D, D, \mathbf V, \Gamma^{[u]})$.
\end{proof}
	\subsection{The Brownian half-plane} 
The Brownian half-plane is an analogue of the Brownian disk with perimeter $1$ discussed in the previous subsection, where the boundary has an infinite perimeter. We present a construction of the Brownian half-plane due to Caraceni and Curien \cite{5} which is very similar to the construction of $\mathbb D$ given above and we just highlight here the main differences. Instead of considering a Bessel bridge, let $(X_t)_{t\in \mathbb R}$ be such that $(X_t)_{t\geq 0}$ and $(X_{-t})_{t\geq 0}$ are two independent $5$-dimensional Bessel processes started from $0$. Conditionally on $X$, we consider a Poisson point measure $\mathcal N_{\mathbb H}=\sum_{j\in J}\delta_{(t_j, \omega^j)}$ on $\mathbb R\times \mathcal S_0$ with intensity:
\begin{equation*}
	2\mathbbm 1_{\{W_*(\omega)>-\sqrt 3 X_{t_i}\}} \mathrm dt \mathbb N_0(\mathrm d \omega).
\end{equation*}
Analogously to the last section, we consider $\mathcal J=\left(\mathbb R\cup \bigcup_{j\in J} T_{\omega^j}\right)\big/_{\sim}$ where $\sim$ identifies $t_j$ and $\rho_{\omega^j}$ for all $j\in J$ and no other pair of points. We endow $\mathcal J$ with the maximal distance $d_{\mathcal J}$ such that the respective restrictions of $d_{\mathcal J}$ to $\mathbb R$ and to $T_{\omega^j}$ coincide with the usual distance on $\mathbb R$ and with $d_{\omega^j}$ respectively (for all $j\in J$). Let $\mu_\infty=\sum_{j\in J}\sigma(\omega^j) \delta_{t_j}$ and define:
\begin{equation*}
	\begin{aligned}
		&\mu_\infty^{-1}(s)=\inf\{t\geq 0, \mu_\infty[0, t]\geq s\} \ \ \ \ \text{ if } s\geq 0, \ \ \\ \text{ and } \ \ &\mu_\infty^{-1}(s)=\sup\{t\leq 0, \mu_\infty[t, 0]\geq -s\} \ \text{ if }  s\leq 0.
	\end{aligned}
\end{equation*} 
We define a surjection $\pi_\infty:\mathbb R\to \mathcal J$ by setting:
\begin{equation*}
	\pi_\infty(s)=\left\{\begin{array}{l}
		p_{\omega^j}\left(s-\mu_\infty[0, t_j)\right)\ \ \ \ \ \ \ \ \ \ \ \text{ if $s\geq 0$ and $\mu^{-1}_\infty(s)=t_j$ for some $j\in J$},\\
		p_{\omega^j}(\sigma(\omega^j)+s+\mu_\infty(t_j,0]) \ \text{ if $s\leq 0$ and $\mu^{-1}_\infty(s)=t_j$ for some $j\in J$},\\
		\mu_\infty^{-1}(s) \ \ \ \ \ \ \ \ \ \ \ \ \ \ \ \ \ \ \ \ \ \ \ \ \ \ \text{ otherwise}.
	\end{array}\right.
\end{equation*} Then again, for all $u, v\in \mathcal J$ we can define an interval $[|u, v|]$ on $\mathcal J$ as the minimal set of the form $\{\pi_\infty(b), \ \ b \in [s, t]\}$ where $\pi_\infty(s)=u$ and $\pi_\infty(t)=v$ and where by convention $[s, t]=(-\infty,t]\cup [s, \infty)$ if $s>t$. Analogously to the definition (\ref{labelFunct}) there is a natural labelling on $\mathcal J$ defined for all $x\in \mathcal J$ by:
\begin{equation*}
	\ell_\infty(x):=	\left\{\begin{array}{l}
		\ell_{\omega^j}(x)+\sqrt 3X_{t_j} \text{ if } x\in T_{\omega^j} \text{ for some } j\in J,\\
		\sqrt 3 X_x \ \ \ \ \ \ \ \ \ \ \ \ \ \ \ \ \ \text{ if } x\in \mathbb R.
	\end{array}\right.
\end{equation*}
This allows us to define analogues of definitions (\ref{distance0D}) and (\ref{distanceD}). For all $u, v\in \mathcal J$, we set:
\begin{align}
	\label{Dinfty0}
	&D^\circ_\infty(u, v):=\ell_\infty(u)+\ell_\infty(v)-2\max\left(\min_{w\in [|u, v|]}\ell_\infty(w), \min_{w\in[|v,u|]}\ell_\infty(w)\right),\\
	\label{Dinfty} &D_\infty(u, v):= \inf_{u=u_0,u_1, \cdots, u_p=v}\sum_{j=0}^{p-1}D^\circ_\infty(u_j, u_{j+1}),
\end{align}
where the infimum is taken over all choices of the integer $p\geq 1$ and elements $u_0,\cdots, u_p\in \mathcal J$ such that $u_0=u$ and $u_p=v$.This allows us to construct a random (non-compact) metric space $(\mathbb H, D_{\infty})$ by taking the quotient space of $\mathcal J$ for the pseudo-distance $D_\infty$. Note that for all $u,v\in \mathbb H$, we get from formulas (\ref{Dinfty0}) and (\ref{Dinfty}) that
\begin{equation}
	\label{boundD}
	|\ell_\infty(u)-\ell_\infty(v)|\leq D_\infty(u,v)\leq D^\circ_\infty(u,v) \leq \ell_\infty(u)+\ell_\infty(v).
\end{equation}
Let us write $\Pi_\infty: \mathcal J\to \mathbb H$ for the canonical projection associated with this quotient and let $\Gamma_\infty:\mathbb R\to \mathbb H$ be the restriction of $\Pi_\infty$ to $\mathbb R$ viewed as a subset of $\mathcal J$. This defines an (infinite) curve on $\mathbb H$ whose range $\partial \mathbb H=\Gamma_\infty(\mathbb R)$ is called the boundary of the Brownian half-plane. Letting $\mathbf V_\infty=(\Pi_\infty\circ \pi_\infty)_*\lambda_{\mathbb R}$ finally allows us to define a volume measure on $\mathbb H$, where we wrote $\lambda_{\mathbb R}$ for Lebesgue measure on $\mathbb R$.
The random curve-decorated measured metric space $(\mathbb H, D_\infty, \mathbf V_\infty, \Gamma_\infty)$ is called the \emph{Brownian half-plane}. Note that $\partial \mathbb H$ and $\mathbb H$ are not compact, but one can define a topology on the set of all curve-decorated locally compact measured metric spaces that extends the GHPU topology. This is provided by the \emph{local} GHPU metric and then again we refer to \cite{11} for a proper definition of this metric.

Let us finish this section by recording a useful lemma stating a zero--one law for events that only depend on arbitrarily small neighborhoods of $0$ in $\mathcal J$. For $\delta>0$, write $\mathcal N_{\mathbb H}^\delta$ for the restriction of $\mathcal N_{\mathbb H}$ to $[-\delta, \delta]\times \mathcal S_0$.

\begin{lemma}[zero--one law for the germ algebra in $\mathbb H$] \label{Trivialalgebra} The $\sigma$-algebra:
	\begin{equation*}
		\mathcal F_{0+}:=\bigcap_{r>0} \sigma\left((X_s)_{s\in [-\delta, \delta]}, \mathcal N_{\mathbb H}^\delta\right),
	\end{equation*}  is trivial. Namely, any event that is measurable with respect to  $\mathcal F_\delta:=\sigma\left((X_s)_{s\in [-\delta, \delta]}, \mathcal N_{\mathbb H}^\delta\right)$, for every $\delta>0$ has probability $0$ or $1$.
\end{lemma}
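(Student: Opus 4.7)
The plan is to realise $\mathcal F_{0+}$ as the germ $\sigma$-algebra of a Markov process started from a deterministic value, and then apply the classical Blumenthal 0-1 law. I would consider the process $(Z_t)_{t \geq 0}$ defined by $Z_t := ((X_s)_{-t \leq s \leq t}, \mathcal N_{\mathbb H}|_{[-t, t] \times \mathcal S_0})$, whose natural filtration at time $t = \delta$ coincides with $\mathcal F_\delta$. Splitting $X$ into the two independent $5$-dimensional Bessel processes $(X_s)_{s \geq 0}$ and $(X_{-s})_{s \geq 0}$, and using that conditional on $X$ the restrictions of $\mathcal N_{\mathbb H}$ to disjoint Borel subsets of $\mathbb R$ are independent, I would verify that $(Z_t)$ is Markov in $t$ with Markov state $(X_t, X_{-t})$, started from the deterministic value $Z_0$ corresponding to $(X_0, X_0) = (0, 0)$ and the empty Poisson measure.

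With this in hand, I would run the following Blumenthal-style argument. For $A \in \mathcal F_{0+}$ and any ``future'' event $B$ measurable with respect to $\sigma(X|_{[-t_0, t_0]^c}, \mathcal N_{\mathbb H}|_{[-t_0, t_0]^c \times \mathcal S_0})$ for some $t_0 > 0$, the Markov property gives, for every $\delta < t_0$,
\[
\mathbb E[\mathbbm 1_B \mid \mathcal F_\delta] = \psi_\delta(X_\delta, X_{-\delta})
\]
for some measurable $\psi_\delta$. Letting $\delta \to 0$ and using the Feller property of the Bessel process together with the continuity of the construction of $\mathcal N_{\mathbb H}$ in $X$, the right-hand side converges to $\mathbb P(B)$; combined with $A \in \mathcal F_\delta$ for every $\delta$, dominated convergence yields $\mathbb P(A \cap B) = \mathbb P(A) \mathbb P(B)$. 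Since the $\sigma$-algebras $\sigma(X|_{[-t_0, t_0]^c}, \mathcal N_{\mathbb H}|_{[-t_0, t_0]^c \times \mathcal S_0})$ for $t_0 > 0$ together generate $\sigma(X, \mathcal N_{\mathbb H})$, a monotone class argument then shows that $A$ is independent of $\sigma(X, \mathcal N_{\mathbb H})$, hence of itself, forcing $\mathbb P(A) \in \{0, 1\}$.

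The main technical difficulty will be to rigorously establish the Feller-type continuity used above: the conditional law of $(X|_{[-t_0, t_0]^c}, \mathcal N_{\mathbb H}|_{[-t_0, t_0]^c \times \mathcal S_0})$ given $(X_\delta, X_{-\delta}) = (x, y)$ must converge to the unconditional law as $(\delta, x, y) \to (0, 0, 0)$. A clean route will be to realise $\mathcal N_{\mathbb H}$ by thinning: take a Poisson point measure $\mathcal M$ on $\mathbb R \times \mathcal S_0$ of intensity $2\, dt\, \mathbb N_0(d\omega)$ independent of $X$, and set $\mathcal N_{\mathbb H}(dt\, d\omega) = \mathbbm 1_{\{W_*(\omega) > -\sqrt 3 X_t\}} \mathcal M(dt\, d\omega)$; continuity in $X$ then reduces to continuity of the thinning indicator, while the Feller property of the $5$-dimensional Bessel process handles the underlying state evolution.
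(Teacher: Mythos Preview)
Your overall architecture coincides with the paper's: reduce to showing that $\mathcal F_{0+}$ is independent of the ``outside'' $\sigma$-algebras by using the Markov property of the Bessel pair and the independence of Poisson restrictions to write $\mathbb E[\mathbbm 1_B\mid\mathcal F_\delta]=\psi_\delta(X_\delta,X_{-\delta})$, then let $\delta\to 0$ and finish with a monotone-class argument. So the proposal is on the right track.

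Where you diverge from the paper is in the passage to the limit. You plan to prove a Feller-type continuity statement for the full augmented process (Bessel \emph{plus} conditional Poisson measure), and you flag this as the main technical difficulty. The paper avoids this entirely. Its observation is that once you know $\mathbb E[\mathbbm 1_B\mid\mathcal F_\delta]$ is $\sigma(X_\delta,X_{-\delta})$-measurable, you are done: the backward-martingale limit $\mathbb E[\mathbbm 1_B\mid\mathcal F_{0+}]$ is then measurable with respect to $\bigcap_{\delta>0}\sigma((X_s)_{|s|\le\delta})$, the germ $\sigma$-algebra of the Bessel process \emph{alone}, which is trivial by the classical Blumenthal law for Feller processes. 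Equivalently (and this is how the paper phrases it), $(X_\eta,X_{-\eta})$ is independent of $\mathcal F_{0+}$, so $\mathbb E[\mathbbm 1_E\,\mathbb E[Z\mid X_\eta,X_{-\eta}]]=\mathbb P(E)\,\mathbb E[Z]$. No continuity of the Poisson construction in $X$ is needed; the Poisson randomness drops out the moment you condition.

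Your proposed route through thinning is workable but carries a genuine wrinkle you underplay: the thinning indicator $\mathbbm 1_{\{W_*(\omega)>-\sqrt 3 X_t\}}$ is \emph{not} continuous in $X_t$, so ``continuity of the thinning indicator'' does not hold as stated. You would need either to restrict to continuity sets $B$ (which suffices for a generating $\pi$-system) or to argue that the discontinuity set has $\mathbb N_0$-measure zero. This is fixable, but it is precisely the detour the paper's argument sidesteps. I would recommend replacing your Feller-continuity step by the paper's shortcut: the only Blumenthal law you need is the one for the five-dimensional Bessel process.
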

\begin{proof}[Proof of Lemma \ref{Trivialalgebra}]
	Let $E$ be an event in $\mathcal F_{0+}$. Fix $\eta>0$ and consider a random variable $Z$ of the form:
	\begin{equation}
		\label{Zvariable}
		Z=\Phi\Big(X_{\eta} ,X_{-\eta}, X_{t_1}, \cdots, X_{t_k}, \mathcal N_{\mathbb H}(B_1), \cdots, \mathcal N_{\mathbb H}(B_l)\Big),
	\end{equation} 
	for real numbers $t_1, \cdots, t_k\in \mathbb R\setminus [-\eta, \eta]$, measurable subsets $B_1, \cdots, B_l$ of $(\mathbb R\setminus [-\eta, \eta])\times \mathcal S_0$ and a bounded continuous function $\Phi:\mathbb R^{l+k+2}\to \mathbb R$. By the Markov property for Bessel processes and properties of Poisson point processes, conditionally on $X_\eta$ and $X_{-\eta}$, the $\sigma$-algebra $\mathcal F_\eta$ is independent of $(X_{t_1}, \cdots, X_{t_k},\mathcal N_{\mathbb H}(B_1), \cdots, \mathcal N_{\mathbb H}(B_l))$. Since $E$ is $\mathcal F_\eta$-measurable, we get:
	\begin{equation*}
		\mathbb E[\mathbbm 1_E Z]=\mathbb E[\mathbbm 1_E \mathbb E[Z \ | \ X_{-\eta}, X_\eta]].
	\end{equation*}
	Now by a direct consequence of the Blumenthal zero--one law for right continuous Feller processes the random variables $X_\eta$ and $X_{-\eta}$ are independent of $\mathcal F_{0+}:=\bigcap_{\delta>0} \mathcal F_{\delta}$ and it follows that: \[\mathbb E[\mathbbm 1_E Z]=\mathbb E[\mathbbm 1_E]\mathbb E\big[\mathbb E[Z|X_\eta, X_{-\eta}]\big]= \mathbb P(E)\mathbb E[Z].\] Since this is true for every random variables of the form (\ref{Zvariable}) and every events $E$ in $\mathcal F_{0+}$ it follows that $\mathcal F_{0+}$ is independent of $\sigma\left((X_s)_{s\in \mathbb R\setminus [-\eta, \eta]}, \mathcal N_{\mathbb H}|_{(\mathbb R\setminus [-\eta, \eta])\times \mathcal S_0}\right)$ for all $\eta>0$, where we wrote $\mathcal N_{\mathbb H}|_{(\mathbb R\setminus [-\eta, \eta])\times \mathcal S_0}$ for the restriction of $\mathcal N_{\mathbb H}$ to $(\mathbb R \setminus [-\eta, \eta])\times \mathcal S_0$. By letting $\eta$ tend to $0$ we get that $\mathcal F_{0+}$ is independent of $\sigma((X_s)_{s\in \mathbb R}, \mathcal N_{\mathbb H})$,  hence of itself. Each event in $\mathcal F_{0+}$ must have probability $0$ or $1$.
\end{proof}
	\subsection{Hausdorff measures}
Let $(E, \Delta)$ be a compact metric space and fix $\delta>0$. A function $h:[0, \delta] \to \mathbb R_+$ is said to be a gauge function if it is positive, continuous, nondecreasing and satisfies $h(0)=0$ and $h(s)>0$ for all $s>0$. For every $A\subset E$, the Hausdorff measure $m^h(A)\in [0, \infty]$ of $A$ associated with the gauge function $h$ is then defined by:
\begin{equation}
	\label{HausdorffMeasures}
	m^h(A)=\lim\limits_{\varepsilon\downarrow 0}\left(\inf_{(U_i)_{i\in I}\in \text{Cov}_{\varepsilon}(A)}\sum_{i\in I} h(\text{diam}_{\Delta}(U_i))\right),
\end{equation}
where $ \text{Cov}_{\varepsilon}(A)$ is the set of all coverings of $A$ by subsets of $E$ of diameter at most $\epsilon$, and where $\text{diam}_\Delta(U_i)$ denotes the diameter of $U_i$ for the metric $\Delta$. We will use the following result that can be found in \cite{8}, Lemma 2.1. If $x\in E$ and $r>0$, we write $ B_r(x)$ for the closed ball of radius $r$ centred at $x$ in $E$.
\begin{lemma}
	\label{Hausdorffmeasurescontrole}
	Assume that there exists a constant $c>0$ such that the function $h$ satisfies $h(2r)\leq c\cdot h(r)$ for every $r\in (0, \delta/2]$. There exist two positive constants $M_1$ and $M_2$, which only depend on $c$, such that the following holds for every finite Borel measure $\mu$ on $E$, every Borel subset $A$ of $E$ and every $b>0$:
	\begin{itemize}
		\item[(i)] If, for every $x\in A$, we have $
		\limsup_{r\downarrow 0}\frac{\mu(B_r(x))}{h(r)}\leq b$,
		then $m^h(A)\geq M_1b^{-1}\mu(A)$.\\
		\item[(ii)] If, for every $x\in A$, we have $
		\limsup_{r\downarrow 0}\frac{\mu(B_r(x))}{h(r)}\geq b$,
		then $m^h(A)\leq M_2 b^{-1}\mu(A).$
	\end{itemize}
\end{lemma}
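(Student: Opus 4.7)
The plan is to establish both bounds by classical Frostman-style density comparison arguments, with the doubling hypothesis $h(2r)\leq c\,h(r)$ used to absorb the constants produced when balls are inflated in a Vitali-type covering. The doubling actually plays a role only in part (ii).

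For (i), the natural strategy is to stratify $A$ by the scale at which the upper density bound becomes effective. Fix $\eta>0$ and set
\[
A_n=\{x\in A:\mu(B_r(x))\leq (b+\eta)h(r)\text{ for every }r\in(0,1/n]\},
\]
so that $A=\bigcup_n A_n$. Given any covering $(U_i)_{i\in I}$ of $A_n$ by sets of diameter at most $\varepsilon\leq 1/n$, one may assume that each $U_i$ intersects $A_n$ and pick $x_i\in U_i\cap A_n$; the inclusion $U_i\subset B_{\text{diam}_\Delta(U_i)}(x_i)$ then gives
\[
\mu(A_n)\leq \sum_i\mu\big(B_{\text{diam}_\Delta(U_i)}(x_i)\big)\leq (b+\eta)\sum_i h\big(\text{diam}_\Delta(U_i)\big).
\]
Taking the infimum over coverings and then letting $\varepsilon\to 0$, $n\to\infty$, and $\eta\to 0$ in turn yields $\mu(A)\leq b\,m^h(A)$, i.e.\ (i) with $M_1=1$.

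For (ii), I would combine the $5r$-covering lemma with outer regularity of $\mu$. Given $\eta>0$, pick an open set $U\supset A$ with $\mu(U)\leq \mu(A)+\eta$, which exists since $\mu$ is a finite Borel measure on a compact metric space. For every $x\in A$ and every $\varepsilon>0$, the hypothesis allows the choice of $r_x\in(0,\varepsilon)$ small enough that $B_{r_x}(x)\subset U$ and $\mu(B_{r_x}(x))\geq (b-\eta)h(r_x)$. The family $\{B_{r_x}(x)\}_{x\in A}$ is a Vitali-type cover of $A$ with bounded radii, and the standard $5r$-covering lemma produces a countable disjoint subfamily $(B_{r_i}(x_i))_i$ such that $A\subset \bigcup_i B_{5r_i}(x_i)$. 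Iterating the doubling bound gives $h(10r_i)\leq c^4 h(r_i)$, so $(B_{5r_i}(x_i))_i$ is a covering of $A$ by sets of diameter at most $10\varepsilon$, and
\[
\sum_i h(10r_i)\leq c^4\sum_i h(r_i)\leq \frac{c^4}{b-\eta}\sum_i\mu(B_{r_i}(x_i))\leq \frac{c^4}{b-\eta}\mu(U).
\]
Letting $\varepsilon\to 0$ and then $\eta\to 0$ produces (ii) with $M_2=c^4$.

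The main obstacle is the $5r$-covering lemma itself, which in a general metric space requires a greedy selection over balls of near-maximal radii; the factor $5$ (hence $c^4$) arises precisely from that inflation. Apart from this, the argument is bookkeeping, and the doubling enters only in (ii) to convert $h(5r_i)$ back into $h(r_i)$. Since the statement is classical, after this outline one may simply refer to \cite{8} for full details.
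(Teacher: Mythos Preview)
Your argument is correct and in fact goes beyond what the paper does: the paper gives no proof of this lemma at all and simply cites it as Lemma~2.1 of \cite{8}. Your outline is the standard Frostman-type proof (stratification for~(i), the $5r$-covering lemma together with doubling for~(ii)), and the constants $M_1=1$, $M_2=c^4$ you obtain are exactly what that argument yields; your final sentence referring back to \cite{8} already matches the paper's approach.
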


In the following, we are interested in the metric spaces associated with the boundaries $\partial \mathbb D$ and $\partial \mathbb H$ of the Brownian disk and the Brownian half plane, endowed with the metric induced by $D$ and $D_\infty$ respectively. As it will become apparent in the following analysis, the correct gauge function to consider in this case is:
\begin{equation}
	\label{defh}
	h(s)=s^2\log\log\frac 1 s.
\end{equation} 
We will write $m_{\mathbb D}^h$ (resp. $m_{\mathbb H}^h$) for the Hausdorff measure associated with the gauge function $h$ on the metric space $(\partial \mathbb D, D|_{\partial \mathbb D})$ (resp. $(\partial \mathbb H, D_\infty|_{\partial \mathbb H})$). Recall that in the constructions of $\mathbb D$ and $\mathbb H$ given above, $\partial \mathbb D$ and $\partial \mathbb H$ are identified with the segment $[0, 1]/_{0\sim 1}$ and with $\mathbb R$ respectively. Somewhat abusively, we will see $m^h_\mathbb D$ and $m^h_\mathbb H$ as measures on $[0, 1]$ and on $\mathbb R$ respectively so that expressions of the form $m^h_\mathbb D([0, \epsilon])$ make sense, for $\epsilon\in (0, 1)$. We record a property that will be useful later, which is a consequence of Lemma \ref{Trivialalgebra}:
\begin{proposition}
	\label{deterBrownPlane}
	There exists a deterministic constant $\kappa \in[0, \infty]$ such that almost surely:
	\begin{equation}
		\label{psiprime}
		\limsup_{\epsilon\downarrow 0}\frac{m^h_{\mathbb D}([0, \epsilon])}{\epsilon}=\limsup_{\epsilon\downarrow 0} \frac{m^h_{\mathbb H}([0, \epsilon])}{\epsilon}=\kappa.
	\end{equation}
\end{proposition}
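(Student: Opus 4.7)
The plan has two parts: show that the limsup in~(\ref{psiprime}) is a.s. constant for $\mathbb H$ via the zero--one law of Lemma~\ref{Trivialalgebra}, then transfer this constancy to $\mathbb D$ via the local coupling between the two spaces.

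First, set $Z_{\mathbb H} := \limsup_{\epsilon \downarrow 0} m^h_{\mathbb H}([0, \epsilon])/\epsilon$. I would show that $Z_{\mathbb H}$ is measurable with respect to the germ $\sigma$-algebra $\mathcal F_{0+}$; the constancy of $Z_{\mathbb H}$ then follows from the zero--one law. Fix $\delta > 0$; we check $\mathcal F_\delta$-measurability. Since $Z_{\mathbb H}$ depends only on the values $m^h_{\mathbb H}([0, \epsilon])$ for $\epsilon$ in any neighbourhood of $0$, and since $m^h_{\mathbb H}([0, \epsilon])$ is determined by the restriction of $D_\infty$ to $\Gamma_\infty([0, \epsilon])$, the task reduces to showing that for every sufficiently small $\epsilon$ this restriction is $\mathcal F_\delta$-measurable.

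The main obstacle is exactly this measurability statement: formula~(\ref{Dinfty}) allows intermediate points $u_j$ anywhere in $\mathcal J$, so a priori the distances between boundary points near $0$ could depend on global data. To overcome this I would appeal to the local construction of \cite{11} (Section~4.2) applied internally to $\mathbb H$: it realises the small closed ball of radius $r$ around $\Gamma_\infty(0)$ (for some random $r>0$ depending only on the $\mathcal F_\delta$-data) as an explicit functional of $(X_u)_{u\in [-\delta,\delta]}$ and $\mathcal N_{\mathbb H}^\delta$. Once $\epsilon$ is small enough that $\Gamma_\infty([0, \epsilon])$ sits well inside this ball (which happens eventually since $D_\infty(\Gamma_\infty(0), \Gamma_\infty(t)) \leq \ell_\infty(t) \to 0$ as $t\to 0$ by~(\ref{boundD})), the near-geodesic sequences in~(\ref{Dinfty}) between points of $\Gamma_\infty([0, \epsilon])$ can be chosen inside this ball, and the relevant distances become $\mathcal F_\delta$-measurable. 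This gives $Z_{\mathbb H} \in \mathcal F_\delta$ for every $\delta>0$, hence $Z_{\mathbb H} \in \mathcal F_{0+}$, and Lemma~\ref{Trivialalgebra} yields $Z_{\mathbb H} = \kappa$ almost surely for some deterministic $\kappa \in [0,\infty]$.

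Finally, to transfer the constancy to $\mathbb D$, I would use the coupling of Section~4.2 of \cite{11} mentioned in the introduction, under which one jointly constructs $\mathbb D$ and $\mathbb H$ so that the closed balls $\overline B_{r_0}(\Gamma(0), \mathbb D)$ and $\overline B_{r_0}(\Gamma_\infty(0), \mathbb H)$ are isometric, for some a.s. positive random $r_0$, via an isometry respecting the boundary parametrizations. By~(\ref{DistLabeling}) and its analogue in $\mathbb H$, for all $\epsilon$ small enough the arcs $\Gamma([0,\epsilon])$ and $\Gamma_\infty([0,\epsilon])$ sit inside these balls and correspond under the isometry, so $m^h_{\mathbb D}([0,\epsilon]) = m^h_{\mathbb H}([0,\epsilon])$ for all such $\epsilon$, and the two limsups agree in the coupling. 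Since each limsup is a functional of its object's marginal law, the equality persists in any realization, with common value the deterministic constant $\kappa$.
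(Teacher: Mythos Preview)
Your two-step strategy matches the paper's, but Step~1 has a genuine gap and Step~2 takes a different route from the paper.

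\textbf{Step 1.} The appeal to \cite{11}, Section~4.2, ``applied internally to $\mathbb H$'' does not do the work you need: that section couples $\mathbb D$ with $\mathbb H$ and does not by itself supply the measurability statement. More importantly, your specific claim that the ball radius $r$ ``depends only on the $\mathcal F_\delta$-data'' is incorrect. The natural radius on which the local and global metrics agree is governed by $\omega_\delta=\inf\{\ell_\infty(x):x\in\mathcal J,\ |\rho(x)|>\delta\}$, and this quantity is \emph{not} $\mathcal F_\delta$-measurable since it depends on labels of trees rooted outside $[-\delta,\delta]$. The paper handles this directly and without external references: it defines an $\mathcal F_\eta$-measurable pseudometric $D_\infty^{(\eta)}$ on $\mathcal J^{(\eta)}$ by restricting the infimum in~(\ref{Dinfty}) to intermediate points with $|\rho(u_j)|\le\eta$, and shows via the label bounds~(\ref{boundD}) that any near-optimal chain between $u,v$ with $\ell_\infty(u),\ell_\infty(v)\le\omega_\eta/3$ must stay in $\{\ell_\infty<\omega_\eta\}\subset\mathcal J^{(\eta)}$, whence $D_\infty^{(\eta)}=D_\infty$ there. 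The point is that although the agreement region is not $\mathcal F_\eta$-measurable, it is a.s.\ a neighbourhood of $0$; since the limsup depends only on arbitrarily small neighbourhoods, it coincides a.s.\ with the same limsup computed from $D_\infty^{(\eta)}$, which \emph{is} $\mathcal F_\eta$-measurable. This subtlety is exactly what your proposal elides.

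\textbf{Step 2.} Your coupling argument is a reasonable alternative, but it requires the local isometry of \cite{11} to send $\Gamma(t)$ to $\Gamma_\infty(t)$ for all small $t$, which you would need to extract from the construction. The paper instead stays within its own framework: it observes that the law of $((R_t)_{|t|\le 1/4},\mathcal N_{\mathbb D}^{1/4})$ is absolutely continuous with respect to that of $((X_t)_{|t|\le 1/4},\mathcal N_{\mathbb H}^{1/4})$, and that (by the same locality argument as in Step~1) both limsups are the \emph{same} measurable functional $\Phi$ of these respective pairs. Hence the law of $\phi^{\mathbb D}$ is absolutely continuous with respect to the Dirac mass at $\kappa$, forcing $\phi^{\mathbb D}=\kappa$ a.s. This avoids any external coupling and reuses the work already done.
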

\begin{proof}[Proof of Proposition \ref{deterBrownPlane}]
	Let us write $\phi^{\mathbb H}=\limsup_{\epsilon\downarrow 0} m^h_{\mathbb H}([0, \epsilon])/\epsilon$ and $\phi^{\mathbb D}=\limsup_{\epsilon\downarrow 0}m^h_\mathbb D([0, \epsilon])/\epsilon$. We will first show that $\phi^{\mathbb H}$ is almost surely constant. To see this, according to the zero--one law of Lemma \ref{Trivialalgebra} we only have to show that $\phi^\mathbb H$ is measurable with respect to $\mathcal F_\eta$ for every $\eta>0$.
	For every $x\in \mathcal J$, we write $\rho(x)=t_j$ if $x\in T_{\omega^j}$ for some $j\in J$ and $\rho(x)=x$ otherwise. Fix $\eta>0$ and consider the subset $\mathcal J^{(\eta)}=\{x\in \mathcal J, |\rho(x)|\leq \eta \}$ of $\mathcal J$, which is obtained by only keeping the trees whose roots fall inside $[-\eta, \eta]$. Consider the following functions on $\mathcal J^{(\eta)}\times \mathcal J^{(\eta)}$, defined for all $u,v\in \mathcal J^{(\eta)}$ by:
	\begin{equation*}
		\begin{aligned}
			D^{\circ, (\eta)}_\infty(u, v)&=\ell_\infty(u)+\ell_\infty(v)-2\max\left(\min_{w\in [|u, v|]\cap \mathcal J^{(\eta)}}\ell_\infty(w), \min_{w\in[|v,u|]\cap \mathcal J^{(\eta)}}\ell_\infty(w)\right), \\
			D_\infty^{(\eta)}(u, v)&= \inf_{\substack{u=u_0,u_1, \cdots, u_p=v\\ |\rho(u_i)|\leq \eta}}\sum_{j=0}^{p-1}D^{\circ, (\eta)}_\infty(u_j, u_{j+1}),
		\end{aligned}
	\end{equation*}	
	where the infimum is taken over all choices of $p\geq 1$ and points $u_0, \cdots, u_p$ in $\mathcal J^{(\eta)}$ such that $u_0=u$ and $u_p=v$. The quantity:
	\begin{equation}
		\label{minlabel}
		\omega_\eta:=	\inf\left\{\ell_\infty(x) : \ x\in \mathcal J, |\rho(x)|>\eta\right\},
	\end{equation}
	is strictly positive a.s. (this is a straightforward consequence of \cite{7}, Lemma 3.3 and of a compactness argument). Let $
	\mathcal B^\eta=	\{x\in \mathcal J, \ \ell_\infty(x)\leq \frac{\omega_\eta}{3}\}$ so that in particular we have $\mathcal B^\eta \subset \mathcal J^{(\eta)}$. We claim that almost surely $D^{(\eta)}_\infty$ coincide with $D_\infty$ on $\mathcal B^\eta$.
	This will imply in particular that $\phi^{\mathbb H}$ is $\mathcal F_\eta$-measurable since to compute $\phi^{\mathbb H}$ one only needs to know the values taken by $D_\infty$ on small enough neighbourhoods of $0$, and the definition of $D_\infty^{(\eta)}$ only involves the pair $((X_t)_{t\in [-\eta, \eta]}, \mathcal N_{\mathbb H}^\eta)$ (note that $\omega_\eta$ is not $\mathcal F_\eta$-measurable but this does not matter here). To get our claim, we first note that if $x, y\in \mathcal J$ are such that $\ell_\infty(x)< \omega_\eta$ and $\ell_\infty(y)< \omega_\eta$ then $x, y$ are in $\mathcal J^{(\eta)}$ and points of labels smaller than $\ell_\infty(x)$ and $\ell_\infty(y)$ all lie in $\mathcal J^{(\eta)}$. This readily implies that we must have $D^{\circ, (\eta)}_\infty(x, y)= D^\circ_\infty(x, y)$. Let $u, v\in \mathcal B^\eta$ and fix $u_0, \cdots, u_p$ in $\mathcal J$ such that $u_0=u$ and $u_p=v$. Suppose that $\ell_\infty(u_k)\geq \omega_\eta$ for some $k\in \{1, \cdots, p-1\}$, then:
	\begin{equation*}
		\sum_{j=0}^{p-1}D_\infty^\circ(u_j, u_{j-1})\geq D_\infty(u, u_k)+D_\infty(u_k, v).
	\end{equation*}
	Using the lower bound in (\ref{boundD}) we get $D_\infty(u,u_k)\geq|\ell_\infty(u_k)-\ell_\infty(u)|\geq 2\omega_\eta/3$ and similarly $D_\infty(u_k,v)\geq 2\omega_\eta/3$. On the other hand, we have using the upper bound in (\ref{boundD}) that $
	D_\infty^\circ(u, v)\leq \ell_\infty(u)+\ell_\infty(v)\leq2 \omega_\eta/3$. It follows that:
	\begin{equation*}
		\sum_{j=0}^{p-1}D_\infty^\circ(u_j, u_{j-1})\geq \frac{4\omega_\eta}{3}>\frac{2\omega_\eta}{3}\geq D_\infty^\circ(u,v)\geq D_\infty(u,v).
	\end{equation*}
	This implies that, in this infimum (\ref{Dinfty}) defining $D_\infty$ we only need to consider points $u_0, u_1, \cdots, u_p$ such that $\ell_\infty(u_j)<\omega_\eta$ to compute the distance between $u$ and $v$. Since we have $D^\circ_\infty(u_j, u_{j+1})= D^{\circ, (\eta)}_\infty(u_j, u_{j+1})$ for such points
	it follows that $D^{(\eta)}_\infty(u, v)= D_\infty(u, v).$ 
	As said before, this implies that $\phi^\mathbb H$ is $\mathcal  F_\eta$-measurable for all $\eta>0$ and is therefore deterministic by a direct application of Lemma \ref{Trivialalgebra}.		
	
	To show that $\phi^\mathbb D=\phi^\mathbb H$ a.s. we then just have to show that the law of $\phi^\mathbb D$ is absolutely continuous with respect to the law of $\phi^\mathbb H$. To see this, consider the restriction of the Bessel bridge $R$ to the segment $[-1/4, 1/4]$ (where we extend $R$ by setting $R_t=R_{t+1}$ for all $t$ in  $[-1/4, 0]$), and if $\mathcal N_{\mathbb D}=\sum_{i\in I} \delta_{(t_i, \omega^i)}$ write:
	\begin{align*}
		\mathcal N_{\mathbb D}^{1/4}= \sum_{i\in I, t_i\in [0, 1/4]} \delta_{(t_i, \omega^i)} +\sum_{i\in I, t_i\in [3/4, 1]} \delta_{(t_i-1, \omega^i)}.
	\end{align*} 
	Standard results on Bessel bridges and Bessel processes show that the law of $(R_t)_{t\in [-1/4, 1/4]}$ is absolutely continuous with respect to the law of $(X_t)_{t\in [-1/4, 1/4]}$. It readily follows that the law of $((R_t)_{t\in [-1/4, 1/4]}, \mathcal N_{\mathbb D}^{1/4})$ is absolutely continuous with respect to the law of $((X_t)_{t\in [-1/4, 1/4]}, \mathcal N_{\mathbb H}^{1/4})$. Arguing similarly as in the previous paragraph, one gets that there is a measurable function $\Phi$ such that: 
	\begin{equation*}
		\begin{aligned}\phi^\mathbb D&=\Phi((R_t)_{t\in [-1/4, 1/4]}, \mathcal N_{\mathbb D}^{1/4}),\\
			\phi^\mathbb H &= \Phi((X_t)_{t\in [-1/4, 1/4]}, \mathcal N_{\mathbb H}^{1/4}).
		\end{aligned}
	\end{equation*} It follows that the law of $\phi^\mathbb D$ must be absolutely continuous with respect to the law of $\phi^\mathbb H$, the result of Proposition \ref{deterBrownPlane} follows.
\end{proof}

	\section{Some estimates for the volume of balls}
	
Recall the construction of the free Brownian disk $(\mathbb D, D)$. In this construction, the boundary $\partial \mathbb D$ is in bijection with the interval $[0,1]/_{0\sim 1}$, where the two endpoints $0$ and $1$ are identified to each other. Somewhat abusively we will write $D:[0, 1]^2\to \mathbb R_+$ for the (pseudo)-distance on $[0, 1]$ induced by the restriction of the metric $D$ on $\partial \mathbb D$ through this identification. Namely, we will simply write $D(s, t)$ to denote the quantity $D(\Gamma(s), \Gamma(t))$. For all $s\in [0, 1]$ and $r\geq 0$, we let $B_r^\mathbb D(s)=\{t\in [0, 1], D(s, t)\leq r\}$ be the ball of radius $r$ centred at $s$ for this metric.  From the construction in section $1$ and using (\ref{DistLabeling}), we see that the process $(D(0, s))_{s\in [0, 1]}$ is the (scaled) $5$-dimensional Bessel bridge $(\sqrt 3 R_t)_{t\in [0, 1]}$ of length $1$ starting and ending at $0$. It follows in particular that we have:
\begin{equation}
	\label{BallIntegralFormula}\lambda(B_{r}^{\mathbb D}(0))= \int_{0}^1\mathbbm 1_{\sqrt 3 R_s<2^{-k}}\mathrm d s,
\end{equation}
where $\lambda$ is the Lebesgue measure on $[0, 1]$.
Note that this quantity coincides with the time spent by a $5$-dimensional standard Brownian bridge inside the ball of radius $2^{-k}/\sqrt 3$. Related quantities have been heavily studied, in particular in the context of computing the Hausdorff measure of Brownian paths (see for instance \cite{6} and \cite{14}). We record a useful estimate:
\begin{proposition}
	\label{TheoBessel}
	Let $(X_t)_{t\geq 0}$ be a $5$-dimensional Bessel process started from $0$ and let $h(s)=s^2\log\log\frac{1}{s}$ for $s>0$. There exist small enough constants $\gamma>0$ and $\alpha>0$ such that for all $n\geq 1$, we have:
	\begin{equation}
		\label{BoundTheoBessel}
		\mathbb P\left(\bigcap_{k=n}^{2n} \left\{\int_0^\infty\mathbbm 1_{X_s<2^{-k}}\mathrm ds\leq \gamma h(2^{-k})\right\}\right)\leq \exp\left(-\alpha \sqrt{n}\right).
	\end{equation}
\end{proposition}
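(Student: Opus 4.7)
The plan is to reduce to an i.i.d.\ structure via scaling and Williams' time-reversal, and then invoke an elementary exponential tail estimate. First, by the Bessel scaling, set $Y_u := 2^n X_{u/4^n}$, which is still a BES$(5)$ from $0$. Writing $k = n+j$ for $j \in \{0,\ldots,n\}$ and using $\log\log(2^{n+j}) \leq 2\log n$, the event in the proposition is contained in $\bigcap_{j=0}^n \{U_j \leq \gamma' 4^{-j}\log n\}$ where $U_j := \int_0^\infty \mathbbm{1}_{Y_u<2^{-j}}\,\mathrm du$ and $\gamma' := 2\gamma$. Next, I apply Williams' time-reversal theorem to the transient BES$(5)$ process $Y$: letting $\Sigma := \sup\{u : Y_u \leq 1\}$ (which is a.s.\ finite), the process $\hat Y_v := Y_{\Sigma-v}$ for $v \in [0, \Sigma]$ is a BES$(-1)$ started from $1$ and absorbed at $0$ at the (random) time $\Sigma$. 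Since $Y_u > 1$ for $u > \Sigma$, we have $U_j = \int_0^\Sigma \mathbbm{1}_{\hat Y_v \leq 2^{-j}}\,\mathrm dv$.

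Next, I exploit a renewal structure at the dyadic first-passage times $\hat\tau_m := \inf\{v : \hat Y_v \leq 2^{-m}\}$ of $\hat Y$. By continuity $\hat Y_{\hat\tau_m} = 2^{-m}$, and by the strong Markov property combined with the scaling invariance of BES$(-1)$, the rescaled pieces $\tilde Y^{(m)}_s := 2^m \hat Y_{\hat\tau_m + 4^{-m}s}$ are BES$(-1)$ processes from $1$, and their truncations up to the hitting time $\tilde\tau^{(m)} := \inf\{s : \tilde Y^{(m)}_s \leq 1/2\}$ of $1/2$ are i.i.d.\ across~$m$. Since $\hat Y_v > 2^{-j}$ for $v < \hat\tau_j$, a change of variables gives the lower bound $U_j \geq 4^{-j}\tilde\xi^{(j)}$, where $\tilde\xi^{(j)} := \int_0^{\tilde\tau^{(j)}} \mathbbm{1}_{\tilde Y^{(j)}_s \leq 1}\,\mathrm ds$; thus $(\tilde\xi^{(j)})_{j \geq 0}$ is an i.i.d.\ sequence, with common distribution that of $\tilde\xi := \int_0^{\tilde\tau} \mathbbm{1}_{Z_s \leq 1}\,\mathrm ds$, where $Z$ is a BES$(-1)$ from $1$ with first hitting time $\tilde\tau$ of $1/2$. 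The event $\{U_j \leq \gamma' 4^{-j}\log n\}$ implies $\tilde\xi^{(j)} \leq \gamma'\log n$, so
\[
\mathbb{P}\!\left(\bigcap_{j=0}^n \{U_j \leq \gamma' 4^{-j}\log n\}\right) \leq \bigl(1 - \mathbb{P}(\tilde\xi > \gamma'\log n)\bigr)^{n+1}.
\]

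The main technical obstacle lies in the final step: an exponential lower bound on the tail of $\tilde\xi$, namely $\mathbb{P}(\tilde\xi > t) \geq c_1 e^{-C_1 t}$ for some $c_1, C_1 > 0$ and all $t \geq 1$. I would obtain this from the inclusion $\{\tilde\xi > t\} \supset \{Z_s \in [1/2, 1] \text{ for all } s \in [0, t]\}$, i.e.\ the event that BES$(-1)$ remains confined to the compact interval $[1/2, 1]$ for time $t$; on $[1/2,1]$ the drift and diffusion coefficients of $Z$ are regular and bounded, so standard one-dimensional spectral theory applied to the BES$(-1)$ generator with Dirichlet conditions on $[1/2,1]$ yields a lower bound of the form $c_1 e^{-\lambda_1 t}$, where $\lambda_1$ denotes the principal eigenvalue. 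Choosing $\gamma'$ small enough that $\gamma'\lambda_1 \leq 1/2$ then gives $\mathbb{P}(\tilde\xi > \gamma'\log n) \geq c_1 n^{-1/2}$, and hence $(1 - c_1 n^{-1/2})^{n+1} \leq \exp(-c_1\sqrt n)$, yielding the proposition with $\alpha = c_1$ and any sufficiently small $\gamma$. The hardest part of the write-up is thus the confinement estimate; everything else follows cleanly from Bessel scaling and the strong Markov property.
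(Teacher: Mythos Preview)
Your overall strategy---scaling, Williams' time reversal to a BES($-1$), and extracting an i.i.d.\ renewal structure at the dyadic first-passage times---is sound and is in fact essentially the argument underlying the reference \cite{BrownianCurve} that the paper invokes. The paper itself does not give a self-contained proof: it simply quotes the bound $\log \mathbb P(\cdot) \leq -\sum_{k=n}^m (k\log 2)^{-\gamma b}$ from that reference and then sums. So your write-up is more detailed than the paper's, but not different in spirit.

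There is, however, a genuine error in your final confinement step. The process $Z$ starts at $Z_0 = 1$, which is the \emph{upper endpoint} of the interval $[1/2,1]$. A one-dimensional diffusion with non-degenerate diffusion coefficient started at a boundary point of an interval exits that interval immediately: near $z=1$ the drift $-1/z$ is bounded, so locally $Z_t - 1$ behaves like a Brownian motion with bounded drift, and by the law of the iterated logarithm $Z$ exceeds $1$ at arbitrarily small times almost surely. Hence
\[
\mathbb P\bigl(Z_s \in [1/2,1] \text{ for all } s \in [0,t]\bigr) = 0 \qquad \text{for every } t>0,
\]
and your inclusion $\{\tilde\xi > t\} \supset \{Z_s \in [1/2,1]\ \forall s\in[0,t]\}$, while true, is vacuous. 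The fix is easy: set $\sigma := \inf\{s : Z_s = 3/4\}$, which is a.s.\ finite and satisfies $\sigma < \tilde\tau$ by continuity of $Z$. On the event $\{Z_{\sigma+s} \in [1/2,1]\ \forall s\in[0,t]\}$ one has $\tilde\tau > \sigma + t$ and $\int_\sigma^{\sigma+t} \mathbbm 1_{Z_s \leq 1}\,\mathrm ds = t$, so $\tilde\xi \geq t$. By the strong Markov property at $\sigma$ this event has probability $\mathbb P_{3/4}\bigl(Z_s\in[1/2,1]\ \forall s\in[0,t]\bigr)$, and \emph{now} the Dirichlet spectral bound applies since $3/4$ lies in the interior, yielding the desired lower bound $c_1 e^{-\lambda_1 t}$. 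With this correction the remainder of your argument goes through unchanged.
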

\begin{proof}[Proof of Proposition \ref{TheoBessel}]
	Following the proof of Theorem 1.2 in \cite{14}, we get the bound:
	\begin{equation*}
		\log \mathbb P\left(\bigcap_{k=n}^{m} \left\{\int_0^\infty\mathbbm 1_{X_s<2^{-k}}\mathrm ds\leq \gamma h(2^{-k})\right\}\right)\leq -\sum_{k=n}^m (k \log 2)^{-\gamma b},
	\end{equation*}
	for some constant $b>0$ and any $1\leq n\leq  m$. It follows that we can find a constant $c>0$ which might depend on $\gamma b$ but not on $m$ and $n$ such that:
	\begin{equation*}
		\log \mathbb P\left(\bigcap_{k=n}^{m} \left\{\int_0^\infty\mathbbm 1_{X_s<2^{-k}}\mathrm ds\leq \gamma h(2^{-k})\right\}\right)\leq -c ((m+1)^{1-\gamma b}-n^{1-\gamma b}).
	\end{equation*}
	In the case $m=2n$ and taking $\gamma$ small enough so that $1-\gamma b\geq 1/2$, we get the bound (\ref{BoundTheoBessel}) for a well chosen constant $\alpha$.
\end{proof}

We can use this result and formula (\ref{BallIntegralFormula}) to infer a key estimate on the volumes  $\lambda(B_r^\mathbb D(0))$ of the balls centred at $0$.
\begin{corollaire} \label{EstimatesVolumeBalls}
	Let $h(s)=s^2\log\log \frac 1 s$, and let $\alpha,\gamma$ be given as in Proposition \ref{TheoBessel}. There exists a constant $K>0$ such that for all $n\geq 1$, we have:
	\begin{equation*}
		\mathbb P\left(\bigcap_{k=n}^{2n} \left\{\lambda(B_{2^{-k}}^\mathbb D(0))\leq \frac \gamma 3 h(2^{-k})\right\}\right)\leq K\exp\left(-\alpha \sqrt n\right).
	\end{equation*}
\end{corollaire}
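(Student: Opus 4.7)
My plan is to reduce the statement to a direct application of Proposition \ref{TheoBessel}, by comparing the Bessel bridge $R$ with a free $5$-dimensional Bessel process and by controlling the truncation of the time horizon.

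The first step is to exploit formula (\ref{BallIntegralFormula}): since the integrand is non-negative, $\lambda(B_{2^{-k}}^{\mathbb{D}}(0))\geq \int_0^{1/2}\mathbbm{1}_{\sqrt{3}R_s<2^{-k}}\,\mathrm{d}s$, so it suffices to bound the probability that the right-hand side is at most $(\gamma/3)h(2^{-k})$ for every $k\in\{n,\dots,2n\}$. The advantage of restricting to $[0,1/2]$ is that the law of $(R_t)_{t\in[0,1/2]}$ is absolutely continuous with respect to the law of $(X_t)_{t\in[0,1/2]}$, where $X$ is a $5$-dimensional Bessel process started from $0$. Conditionally on the value at time $1/2$, each of the two processes is distributed as a $5$-dim Bessel bridge from $0$ to that value of length $1/2$, so the Radon-Nikodym derivative equals $f(X_{1/2})/g(X_{1/2})$, where $f$ and $g$ are the marginal densities of $R_{1/2}$ and $X_{1/2}$. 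Using $R\stackrel{d}{=}|\beta|$ for $\beta$ a $5$-dim Brownian bridge of length $1$ and writing out the chi-type densities shows that this ratio equals $2^{5/2}e^{-X_{1/2}^{2}}$, and in particular is bounded by the universal constant $C:=2^{5/2}$.

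This yields a factor-$C$ comparison of the probability we wish to bound with the analogous probability for $X$. By Brownian scaling, $(\sqrt{3}X_{s/3})_{s\geq 0}\stackrel{d}{=}(X_s)_{s\geq 0}$, so the change of variables $s=u/3$ gives $\int_0^{1/2}\mathbbm{1}_{\sqrt{3}X_s<2^{-k}}\,\mathrm{d}s \stackrel{d}{=}\frac{1}{3}\int_0^{3/2}\mathbbm{1}_{X_u<2^{-k}}\,\mathrm{d}u$, and our event becomes $\{\int_0^{3/2}\mathbbm{1}_{X_u<2^{-k}}\,\mathrm{d}u\leq\gamma h(2^{-k})\}$. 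Note how the prefactor $1/3$ is neatly absorbed, so the threshold $\gamma h(2^{-k})$ now matches that of Proposition \ref{TheoBessel} exactly; the only remaining discrepancy is that Proposition \ref{TheoBessel} deals with the integral over $[0,\infty)$ and ours is over $[0,3/2]$.

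To close this gap I would introduce the event $F_n=\{\inf_{u\geq 1/2}X_u\geq 2^{-n}\}$: on $F_n$, for every $k\in\{n,\dots,2n\}$ one has $\int_0^{3/2}\mathbbm{1}_{X_u<2^{-k}}\,\mathrm{d}u=\int_0^{\infty}\mathbbm{1}_{X_u<2^{-k}}\,\mathrm{d}u$, so the intersection of our events is contained in $F_n^c$ together with the event appearing in Proposition \ref{TheoBessel}. Using the classical hitting-probability formula $\mathbb{P}_x(\inf_s X_s<r)=(r/x)^3$ for a $5$-dim Bessel process and the Markov property at time $1/2$, a short estimate gives $\mathbb{P}(F_n^c)\lesssim 2^{-3n}$, which is negligible compared to $\exp(-\alpha\sqrt{n})$ (and small values of $n$ can be absorbed into the constant $K$). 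I expect the main obstacle to be the explicit computation of the Radon-Nikodym derivative in the second paragraph: although elementary, the boundedness of the ratio hinges on a genuine cancellation of leading Gaussian factors, so the densities must be written down consistently and the computation handled with care.
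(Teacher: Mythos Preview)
Your proposal is correct and follows essentially the same route as the paper: restrict to $[0,1/2]$ via monotonicity, pass from the Bessel bridge to the Bessel process using the bounded Radon--Nikodym derivative $2^{5/2}e^{-X_{1/2}^2}$, rescale to land on $\int_0^{3/2}\mathbbm 1_{X_u<2^{-k}}\,\mathrm du\leq \gamma h(2^{-k})$, and then compare with the integral over $[0,\infty)$ before invoking Proposition~\ref{TheoBessel}. The only cosmetic difference is in the last step: the paper controls the discrepancy via the last-passage event $A_n=\{L_n>3/2\}$ with $L_n=\sup\{s:X_s=2^{-n}\}$ and bounds $\mathbb P(A_n)\lesssim 2^{-2n}$ using $\mathbb E[L_1]<\infty$, whereas you use $F_n=\{\inf_{u\ge 1/2}X_u\ge 2^{-n}\}$ and the Bessel hitting formula to get $\mathbb P(F_n^c)\lesssim 2^{-3n}$; both error terms are dominated by $\exp(-\alpha\sqrt n)$, so the conclusion is the same.
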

\begin{proof}[Proof of Corollary \ref{EstimatesVolumeBalls}]
	Using formula (\ref{BallIntegralFormula}) we get:
	\begin{equation*}
		\begin{aligned}
			\mathbb P\left(\bigcap_{k=n}^{2n} \left\{\lambda(B_{2^{-k}}^\mathbb D(0))\leq \frac \gamma 3 h(2^{-k})\right\}\right)&=	\mathbb P\left(\bigcap_{k=n}^{2n} \left\{\int_{0}^{1}\mathbbm 1_{\sqrt 3\cdot R_s<  2^{-k}}\mathrm ds\leq \frac \gamma 3 h(2^{-k})\right\}\right)\\
			&\leq \mathbb P\left(\bigcap_{k=n}^{2n} \left\{\int_{0}^{1/2}\mathbbm 1_{\sqrt 3\cdot R_s<2^{-k}}\mathrm ds\leq \frac \gamma 3 h(2^{-k})\right\}\right).
		\end{aligned}
	\end{equation*}
	As before, let $(X_s)_{s\geq 0}$ be a $5$-dimensional Bessel process started at $0$. By standard results on Bessel processes and Bessel bridges, the conditional law of $(X_s)_{s\leq 1/2}$ knowing $X_{1/2}$ is the distribution of a Bessel bridge of length $1/2$ from $0$ to $X_{1/2}$. Analogously the conditional law of $(R_s)_{s\leq 1/2}$ knowing $R_{1/2}$ is the distribution of a Bessel bridge of length $1/2$ from $0$ to $R_{1/2}$. This implies that for any bounded measurable functional $F$ on the space $\mathcal C^0([0, 1/2], \mathbb R_+)$ of all continuous functions from $[0, 1/2]$ into $\mathbb R_+$, we have:
	\begin{equation*}
		\mathbb E\left[F\left((R_t)_{0\leq t\leq 1/2}\right)\right]=\mathbb E\left[f(X_{1/2})F\left((X_t)_{0\leq t\leq 1/2}\right)\right],
	\end{equation*}
	where $f$ is the Radon--Nikodym derivative of the law of $R_{1/2}$ with respect to the law of $X_{1/2}$. Now, the laws of $X_{1/2}$ and $R_{1/2}$ are $\chi(5)$-laws scaled by some explicit constants (recall that the $\chi(5)$-distribution is the law of the norm of a standard Gaussian variable in $\mathbb R^5$). More precisely $(X_t^2)_{t\geq 0}$ (resp. $(R_{t}^2)_{t\in [0, 1]}$) is distributed as the sum of the squares of $5$ independent standard Brownian motions (resp. $5$ independent Brownian bridges). It follows that the density of $R_{1/2}$ is given by $\bar \rho(x)=\frac{32\sqrt 2}{3\sqrt \pi}x^4e^{-2x^2}\mathbbm 1_{x\geq 0}$ and the density of $X_{1/2}$ is given by $\rho(x)=\frac{8}{3\sqrt \pi}x^4e^{-x^2}\mathbbm 1_{x\geq 0}$. We therefore have $f(u)=\bar \rho(u)/\rho(u)=4\sqrt 2\exp(-u^2)$ for all $u\geq 0$. Hence for any positive measurable functional $F:\mathcal C^0([0, 1/2], \mathbb R_+)\to \mathbb R_+$ we have:
	\begin{equation*}
		\mathbb E\left[F\left((R_t)_{t\leq 1/2}\right)\right]\leq 4\sqrt 2 \ \mathbb E\left[F\left((X_t)_{t\leq 1/2}\right)\right].
	\end{equation*}
	Here, we get in particular that:
	\begin{equation}
		\label{AbsContinuity}
		\mathbb P\left(\bigcap_{k=n}^{2n} \left\{\lambda(B_{2^{-k}}^\mathbb D(0))\leq \frac \gamma 3 h(2^{-k})\right\}\right)\leq 4\sqrt 2 \ \mathbb P\left(\bigcap_{k=n}^{2n} \left\{\int_{0}^{1/2}\mathbbm 1_{\sqrt 3\cdot X_s<2^{-k}}\mathrm ds\leq \frac \gamma 3 h(2^{-k})\right\}\right).
	\end{equation}
	From scaling properties of Bessel processes, for every $c>0$, the rescaled process $(c X_{t/c^2} )_{t\geq 0}$ is also a $5$-dimensional Bessel process. It follows in particular that:
	\begin{equation*}
		\begin{aligned}
			\mathbb P\left(\bigcap_{k= n}^{2n} \left\{\int_0^{1/2}\mathbbm 1_{\sqrt 3 X_s<2^{-k}}\mathrm ds\leq \frac \gamma 3 h(2^{-k})\right\}\right)&=\mathbb P\left(\bigcap_{k= n}^{2n} \left\{\int_0^{1/2}\mathbbm 1_{X_{3s}<2^{-k}}\mathrm ds\leq \frac \gamma 3 h(2^{-k})\right\}\right)\\
			&= \mathbb P\left(\bigcap_{k=n}^{2n} \left\{\int_0^{3/2}\mathbbm 1_{X_{s}<2^{-k}}\mathrm ds\leq \gamma h(2^{-k})\right\}\right)
		\end{aligned}
	\end{equation*}
	We then use Proposition \ref{TheoBessel} to get an upper bound for the latter probability. Let $L_n=\sup\{s\geq 0 \ : \ X_s=2^{-n}\}$ and consider the event $A_n=\{L_{n}>3/2\}$. Note that on $A_n^c$ we have by construction, for all $k\geq n$,
	\begin{equation*}
		\int_0^\infty\mathbbm 1_{X_{s}<2^{-k}}\mathrm ds= \int_0^{ 3/2}\mathbbm 1_{X_{s}<2^{-k}}\mathrm ds.
	\end{equation*}By the scaling property of $X$ we easily see that $\mathbb P(A_n)=\mathbb P(L_1>3\cdot 2^{2n-1})$. Now, for instance by using the explicit density of $L_1$ in \cite{9}, we know that $\mathbb E[L_1]<\infty$ and in particular by a straightforward Markov inequality, we have $\mathbb P(A_n)\leq \frac{2}{3}\mathbb E[L_1]2^{-2n}$. Therefore:
	\begin{equation*}
		\begin{aligned}
			\mathbb P\left(\bigcap_{k=n }^{2n} \left\{\int_{0}^{3/2}\mathbbm 1_{ X_s<2^{-k}}\mathrm ds\leq \gamma h(2^{-k})\right\}\right) &\leq \mathbb P(A_n)+\mathbb P\left(A_n^c\cap\bigcap_{k=n}^{2n}\left\{\int_{0}^{3/2}\mathbbm 1_{ X_s<2^{-k}}\mathrm ds\leq \gamma h(2^{-k})\right\}\right)\\
			& \leq \frac{2}{3}\mathbb E[L_1]2^{-2 n }+\mathbb P\left(\bigcap_{k=n}^{2n} \left\{\int_{0}^{\infty}\mathbbm 1_{X_s<2^{-k}}\mathrm ds\leq \gamma h(2^{-k})\right\}\right)\\
			&  \leq \frac 2 3 \mathbb E[L_1]2^{-2n }+\exp(-\alpha\sqrt n),
		\end{aligned}
	\end{equation*}
	by Proposition \ref{TheoBessel}.
	Using this bound and (\ref{AbsContinuity}), we get the conclusion of the proposition for some well chosen constant $K$.
\end{proof}
	
	\section{Upper bound for the Hausdorff measure}
	
We now proceed to the proof of Theorem \ref{MainResult}. The main idea is to use Lemma \ref{Hausdorffmeasurescontrole} to verify that $m^h_{\mathbb D}$ and the Lebesgue measure $\lambda$ are mutually absolutely continuous and then to apply Proposition \ref{deterBrownPlane} to show that the corresponding Radon--Nikodym derivative is constant almost surely. We first derive a useful bound for the distance $D$:
\begin{lemma}
	For almost all $\omega\in \Omega$, we can find a constant $c(\omega)$, which depends on $\omega$, such that for every $s,t\in [0, 1]$:
	\begin{equation}
		\label{MajoDistance}
		D(s, t)\leq c(\omega)\left(1+\log\frac{1}{|t-s|}\right)|t-s|^{1/2}.
	\end{equation}
\end{lemma}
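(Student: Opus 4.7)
The plan is to deduce the bound from a dyadic chaining argument, using the re-rooting invariance from Lemma \ref{rerooting} to reduce the control of $D(s,t)$ to a tail estimate on the Bessel bridge. Since $\sqrt{y}\leq 1+y$ for $y\geq 0$, it suffices to prove the sharper statement $D(s,t) \leq C(\omega)\sqrt{|t-s|\log(1/|t-s|)}$.

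The first step reduces the problem to a tail estimate for $R_u$. For any fixed $s<t$ in $[0,1]$, applying Lemma \ref{rerooting} with $u=s$ gives $D(s,t) \stackrel{d}{=} D(0,t-s) = \sqrt{3}\,R_{t-s}$. Using the representation $R_u^2=\sum_{i=1}^{5}(B_u^i)^2$ with $B^1,\dots,B^5$ independent standard Brownian bridges from $0$ to $0$, we have $R_u \stackrel{d}{=} \sqrt{u(1-u)}\,\chi_5$, so standard Gaussian tail bounds yield constants $c,C>0$ such that
\[
\mathbb{P}(R_u \geq \lambda) \leq C \exp(-c\lambda^2/u) \quad \text{for every } u\in(0,1/2] \text{ and } \lambda>0.
\]

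Combined with a union bound over the dyadic grid $\mathcal{D}_n := \{k2^{-n}:0\leq k<2^n\}$, setting $A_n := \max_{0\leq k<2^n} D(k2^{-n},(k+1)2^{-n})$, we get that for every $C_1$ large enough,
\[
\mathbb{P}\!\left(A_n \geq C_1 \sqrt{n\,2^{-n}}\right) \leq 2^n C \exp(-c C_1^2 n) \leq 2^{-2n},
\]
so the Borel--Cantelli lemma gives almost surely $A_n \leq C_1\sqrt{n\,2^{-n}}$ for every $n$ large enough.

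Finally, we chain the bound to arbitrary pairs. Given $s<t$ with $|t-s|$ small, set $n_0:=\lfloor\log_2(1/|t-s|)\rfloor$, and for $n\geq n_0$ let $s_n,t_n$ be the largest elements of $\mathcal{D}_n$ that are $\leq s,t$ respectively. Since $s_n,s_{n+1}$ are consecutive (or equal) points of $\mathcal{D}_{n+1}$, the triangle inequality and the continuity of $D$ give $D(s,s_{n_0}) \leq \sum_{n\geq n_0} A_{n+1}$, and similarly for $D(t_{n_0},t)$; meanwhile $|s_{n_0}-t_{n_0}|\leq 3\cdot 2^{-n_0}$ yields $D(s_{n_0},t_{n_0})\leq 3A_{n_0}$. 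Summing the series $\sum_{n\geq n_0}\sqrt{n\,2^{-n}}$, which has the same order as its first term, we conclude $D(s,t) \leq C(\omega)\sqrt{n_0\,2^{-n_0}} \leq C(\omega)\sqrt{|t-s|\log(1/|t-s|)}$, which implies the announced bound. The only technical point is the Bessel bridge tail estimate with the endpoint conditioning; this is direct from the explicit density of $R_u$, and the remainder of the argument is a standard application of Kolmogorov-type chaining.
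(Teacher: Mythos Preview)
Your proof is correct and follows the same route as the paper: use the rerooting invariance of Lemma~\ref{rerooting} to identify the law of a dyadic increment $D((i-1)2^{-n},i2^{-n})$ with $\sqrt{3}\,R_{2^{-n}}$, apply a Gaussian-type tail bound for the Bessel bridge, use Borel--Cantelli over the dyadic grid, and finish with a standard chaining argument. The only difference is cosmetic: the paper uses the cruder threshold $n\,2^{-n/2}$ (bounding the tail by $\tilde c\,e^{-n}$), whereas you use the sharper threshold $C_1\sqrt{n\,2^{-n}}$, which yields the slightly stronger modulus $\sqrt{|t-s|\log(1/|t-s|)}$ instead of $(1+\log(1/|t-s|))\,|t-s|^{1/2}$; both suffice for the application in the next proposition.
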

\begin{proof}[Proof]
	Using the rerooting property of Lemma \ref{rerooting} we have for all $n\geq 1$ and all $i\in \{1, \cdots, 2^n\}$:
	\begin{equation*}
		\mathbb P\left(D((i-1)2^{-n}, i2^{-n})>n2^{-n/2}\right)=\mathbb P\left(\sqrt 3\cdot R_{2^{-n}}>n2^{-n/2}\right)\leq \tilde c e^{-n},
	\end{equation*}
	for some constant $\tilde c>0$. To derive the last bound, we use the fact that $R_{2^{-n}}$ has the same law as the norm of a centred Gaussian vector with covariance matrix $2^{-n}(1-2^{-n})\text{Id}$ in $\mathbb R^5$. It follows by the Borel--Cantelli lemma that almost surely we can find a (random) integer $n_0$ such that:
	\begin{equation*}
		\forall n\geq n_0, \forall i\in \{1, \cdots, 2^n\}, \ \  D((i-1)2^{-n}, i2^{-n})\leq n2^{-n/2}.
	\end{equation*}
	By standard chaining arguments, it follows that almost surely:
	\begin{equation*}
		\sup_{s, t\in [0, 1[, s\neq t} \frac{D(s, t)}{\left(1+\log(1/|t-s|)\right)|t-s|^{1/2}}<\infty,
	\end{equation*}
	which gives the conclusion of the lemma.
\end{proof}
\begin{proposition}
	Let $\gamma$ be as in Proposition \ref{EstimatesVolumeBalls}. We have almost surely that:
	\begin{equation*}
		m^h_\mathbb D\left(\left\{t\in [0, 1] \ : \ \limsup_{r\downarrow 0}\frac{ \lambda(B_r^\mathbb D(t ))}{h(r)}<\frac{\gamma}{12}\right\}\right)=0.
	\end{equation*}
\end{proposition}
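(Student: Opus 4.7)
My plan is to combine Corollary \ref{EstimatesVolumeBalls} (probabilistic control of ball volumes) with the chaining estimate (\ref{MajoDistance}) (upper bound on $D$-diameters of Euclidean intervals) to construct, for each $n$, a random cover of a ``bad'' set $G_n$ whose total $h$-weight tends to zero. The point of difficulty is that a plain Fubini bound based on Corollary \ref{EstimatesVolumeBalls} only shows that $G_n$ has small \emph{Lebesgue} measure, which does not by itself control the Hausdorff measure in the $D$-metric; the chaining estimate (\ref{MajoDistance}) is what upgrades the pointwise probabilistic bound to an ``intervalwise'' one.

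\emph{Reduction.} For each $n \geq 1$ I set
\[
G_n := \left\{ t \in [0,1] \ : \ \lambda\bigl(B_{2^{-k}}^{\mathbb D}(t)\bigr) \leq \frac{\gamma}{12}\, h(2^{-k})\ \forall k \in \{n,\ldots,2n\}\right\}.
\]
If $t$ lies in the set $A$ of the statement (i.e., $\limsup_{r \downarrow 0}\lambda(B_r^{\mathbb D}(t))/h(r) < \gamma/12$), then $t \in G_n$ for every $n$ large enough. Hence $A \subset \bigcup_N \bigcap_{n \geq N} G_n$, and by countable subadditivity of $m^h_\mathbb D$ it suffices to show that $m^h_{\mathbb D}\bigl(\bigcap_{n \geq N} G_n\bigr) = 0$ almost surely for every fixed $N$.

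\emph{Probabilistic bound and chaining.} By Lemma \ref{rerooting} and Corollary \ref{EstimatesVolumeBalls} (applied to the stronger constant $\gamma/12 \leq \gamma/3$), $\mathbb P(t \in G_n) \leq K e^{-\alpha\sqrt n}$ for every $t \in [0,1]$. I fix $C > 0$ and work on the event $E_C := \{c(\omega) \leq C\}$ (whose probability tends to $1$ as $C \to \infty$), then choose the scale $N_n := \lceil 4n + 10\log_2 n\rceil$ and partition $[0,1]$ into $2^{N_n}$ dyadic intervals $(I_i)$ of Euclidean length $2^{-N_n}$. By (\ref{MajoDistance}), on $E_C$ each $I_i$ has $D$-diameter at most $\varepsilon_n := C(1 + N_n \log 2)\, 2^{-N_n/2}$, and $\varepsilon_n \leq 2^{-2n-1}$ for $n$ large. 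The key propagation fact is that if $I_i \cap G_n \neq \emptyset$ at some $s \in I_i$, then for every $t \in I_i$ and every $k \in \{n, \ldots, 2n\}$ the triangle inequality gives $B_{2^{-k-1}}^{\mathbb D}(t) \subset B_{2^{-k}}^{\mathbb D}(s)$, and combining with the doubling bound $h(2^{-k}) \leq 4h(2^{-k-1})$,
\[
\lambda\bigl(B_{2^{-k-1}}^{\mathbb D}(t)\bigr) \leq \lambda\bigl(B_{2^{-k}}^{\mathbb D}(s)\bigr) \leq \frac{\gamma}{12} h(2^{-k}) \leq \frac{\gamma}{3} h(2^{-k-1}).
\]
Hence $I_i \subset \tilde G_n := \{u : \lambda(B_{2^{-j}}^{\mathbb D}(u)) \leq (\gamma/3)\, h(2^{-j}) \ \forall j \in \{n+1, \ldots, 2n+1\}\}$, and an adaptation of the proof of Corollary \ref{EstimatesVolumeBalls} to this shifted range yields $\mathbb P(u \in \tilde G_n) \leq K' e^{-\alpha'\sqrt n}$.

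\emph{Counting and conclusion.} Letting $\mathcal I_n := \{i : I_i \cap G_n \neq \emptyset\}$, on $E_C$ we have $I_i \subset \tilde G_n$ for every $i \in \mathcal I_n$, so $\mathbb E[|\mathcal I_n|\mathbbm 1_{E_C}] \leq 2^{N_n} K' e^{-\alpha'\sqrt n}$. The family $(I_i)_{i \in \mathcal I_n}$ covers $\bigcap_{n' \geq N} G_{n'}$ with $D$-diameters at most $\varepsilon_n$, and a short computation using $h(r) = r^2 \log\log(1/r)$ gives $2^{N_n} h(\varepsilon_n) \leq 4C^2 N_n^2 \log N_n$. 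Therefore
\[
\mathbb E\left[\mathbbm 1_{E_C} \sum_{i \in \mathcal I_n} h\bigl(\mathrm{diam}_D(I_i)\bigr)\right] \leq C'''\, n^2 \log n \cdot e^{-\alpha'\sqrt n},
\]
which is summable in $n$. Markov and Borel--Cantelli then give that the sum above tends to $0$ almost surely on $E_C$; since $\varepsilon_n \to 0$, the definition (\ref{HausdorffMeasures}) of $m^h_{\mathbb D}$ forces $m^h_{\mathbb D}\bigl(\bigcap_{n' \geq N} G_{n'}\bigr) = 0$ almost surely on $E_C$, and letting $C \to \infty$ completes the proof.
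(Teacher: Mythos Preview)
Your proof is correct and follows essentially the same strategy as the paper: reduce to showing that a ``bad'' set has zero $m^h_{\mathbb D}$-measure, cover it by dyadic intervals at scale roughly $2^{-4n}$, use the chaining bound (\ref{MajoDistance}) together with the doubling inequality $h(2^{-k})\leq 4h(2^{-k-1})$ to propagate the $\gamma/12$-constraint on one point of an interval to a $\gamma/3$-constraint on a dyadic endpoint, and then combine rerooting with Corollary~\ref{EstimatesVolumeBalls} and Borel--Cantelli to show the total $h$-weight of the cover tends to zero. The only cosmetic differences are that the paper works directly with the random constant $c(\omega)$ rather than conditioning on $E_C$ and letting $C\to\infty$, and uses the tail set $\mathcal B_{n_0}=\{t:\lambda(B_{2^{-k}}^{\mathbb D}(t))\leq\frac{\gamma}{12}h(2^{-k})\ \forall k\geq n_0\}$ and truncations $\mathcal B_{n_0+1,p}$ in place of your $G_n$ and $\tilde G_n$.
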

\begin{proof}[Proof]
	It suffices to show that, for every $n_0\geq 1$, the  $m^h_\mathbb D$-measure of:
	\begin{equation*}
		\mathcal B_{n_0}:=\left\{t\in[0, 1] : \lambda(B_{2^{-k}}^\mathbb D(t))\leq \frac{\gamma}{12} h(2^{-k}), \forall k\geq n_0 \right\}, 
	\end{equation*}
	is almost surely $0$. Let us introduce:
	\begin{equation*}
		\mathcal B_{n_0, n}:=\left\{t\in[0, 1] : \lambda(B_{2^{-k}}^\mathbb D(t))\leq \frac \gamma 3 h(2^{-k}), \forall k\in\{n_0, n_0+1, \cdots, n\} \right\}.
	\end{equation*}
	\underline{Claim:} For all $p$ large enough, depending on $\omega$, we have for all $t\in [0, 1]$:
	\begin{equation*}
		t\in \mathcal B_{n_0}\Rightarrow 2^{-4p}\lfloor 2^{4p} t\rfloor\in \mathcal B_{n_0+1, p}.
	\end{equation*}  
	Let us prove this claim. Write  $\lfloor t\rfloor_p=2^{-4p}\lfloor 2^{4p} t\rfloor$ to simplify notation. For $p$ large enough, we have using (\ref{MajoDistance}):
	\begin{equation*}
		D(\lfloor t\rfloor_p, t)\leq c(\omega)\left(1+\log 2^{4p}\right)2^{-2p}\leq 2^{-p-1}.
	\end{equation*}
	In particular, it readily follows that for all $k\leq p$, we have $B_{2^{-k-1}}^\mathbb D(\lfloor t\rfloor_p)\subset B_{2^{-k}}^\mathbb D(t)$. Now suppose that $t\in \mathcal B_{n_0}$, then for all $n_0\leq k\leq p$:
	\begin{equation*}
		\lambda(B_{2^{-k-1}}^\mathbb D(\lfloor t\rfloor_p))\leq \lambda(B_{2^{-k}}^{\mathbb D}(t))\leq \frac{\gamma}{12}h(2^{-k})\leq \frac \gamma 3 h(2^{-k-1}),
	\end{equation*}
	which implies that $\lfloor t\rfloor_p\in \mathcal B_{n_0+1, p}$. This completes the proof of our claim.
	
	In particular, the claim shows that when $p$ is large enough:
	\begin{equation*}
		\mathcal B_{n_0}\subset  \bigcup_{\substack{i\in \{0, \cdots, 2^{4p}-1\}\\ i2^{-p}\in \mathcal B_{n_0+1, p}}}\left[i2^{-4p}, (i+1)2^{-4p}\right],
	\end{equation*}
	and the $D$-diameter of each of the intervals in the union of the last display is at most $c(\omega)(1+\log2^{4p})2^{-2p}$. It follows that:
	\begin{equation}
		\label{ineqmh}
		m^h_\mathbb D(\mathcal B_{n_0})\leq \liminf_{p\to \infty} \left[\text{Card}\left\{i\in \{0,\cdots, 2^{4p}-1\}: \ \frac{i}{2^{4p}}\in \mathcal B_{n_0+1, p} \right\} h\left(c(\omega)\frac{1+\log 2^{4p}}{2^{2p}}\right)\right].
	\end{equation}
	Using Corollary \ref{EstimatesVolumeBalls} and the rerooting property of Lemma \ref{rerooting}, we see that whenever $p\geq 2(n_0+1)$:
	\begin{equation*}
		\mathbb E\left[\text{Card}\left\{i\in \{0,\cdots, 2^{4p}-1\}: \ i2^{-4p}\in \mathcal B_{n_0+1, p} \right\}\right]=2^{4p}\mathbb P(0\in \mathcal B_{n_0+1, p})\leq 2^{4p} K \exp\left(-\alpha\sqrt{\lfloor p/2\rfloor}\right).
	\end{equation*}
	Using a Markov inequality and the Borel--Cantelli lemma, it follows that for any $\beta\in (0, \alpha/\sqrt 2)$, we have a.s.:
	\begin{equation*}
		2^{-4p}\exp(\beta\sqrt p) \text{Card}\{i\in \{0, \cdots, 2^{4p}-1\}: \ i2^{-4p}\in \mathcal B_{n_0+1, p}\}\xrightarrow[p\to \infty]{} 0.
	\end{equation*}
	Since $h\left(c(\omega)(1+\log 2^{4p})2^{-2p}\right)<2^{-4p}\exp(\beta\sqrt p)$ as soon as $p$ is large enough, it follows that:
	\begin{equation*}
		\text{Card}\left\{i\in \{0,\cdots, 2^{4p}-1\}: \  i2^{-4p}\in \mathcal B_{n_0+1, p} \right\} \times h\left(c(\omega)(1+\log 2^{4p})2^{-2p}\right)\xrightarrow[p\to\infty]{} 0.
	\end{equation*}
	And (\ref{ineqmh}) gives the desired result.
\end{proof}
\begin{corollaire}
	There exist constants $\kappa_1, \kappa_2>0$ such that for any Borel set $A\subset [0, 1]$, we have:
	\begin{equation*}
		\kappa_2\lambda(A) \leq m^h_\mathbb D(A)\leq \kappa_1 \lambda(A).
	\end{equation*}
\end{corollaire}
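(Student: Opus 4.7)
The plan is to apply both parts of Lemma \ref{Hausdorffmeasurescontrole} with $\mu = \lambda$ and the gauge $h$. Note first that $h(2s)/h(s) \to 4$ as $s \downarrow 0$, so $h$ satisfies the doubling hypothesis of the lemma on a small enough interval.

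For the upper bound $m^h_{\mathbb D}(A) \leq \kappa_1 \lambda(A)$, set $G := \{t \in [0,1] : \limsup_{r \downarrow 0} \lambda(B_r^{\mathbb D}(t))/h(r) \geq \gamma/12\}$. The preceding Proposition gives $m^h_{\mathbb D}([0,1] \setminus G) = 0$. Applying Lemma \ref{Hausdorffmeasurescontrole}(ii) to $A \cap G$ with $b = \gamma/12$ yields
\begin{equation*}
m^h_{\mathbb D}(A) = m^h_{\mathbb D}(A \cap G) \leq M_2 (12/\gamma)\, \lambda(A \cap G) \leq \kappa_1 \lambda(A), \qquad \kappa_1 := 12 M_2/\gamma.
\end{equation*}
This direction is essentially immediate from the Proposition.

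For the lower bound $m^h_{\mathbb D}(A) \geq \kappa_2 \lambda(A)$, I dually apply Lemma \ref{Hausdorffmeasurescontrole}(i), which requires a matching upper density bound $\limsup_{r \downarrow 0} \lambda(B_r^{\mathbb D}(t))/h(r) \leq \Gamma$ on a set of full Lebesgue measure. Thanks to the rerooting invariance of Lemma \ref{rerooting} and Fubini, this reduces to the single-root statement $\limsup_{r \downarrow 0} \lambda(B_r^{\mathbb D}(0))/h(r) < \infty$ almost surely. Via formula (\ref{BallIntegralFormula}) and exactly the absolute-continuity comparison already used in the proof of Corollary \ref{EstimatesVolumeBalls} (trading the $5$-dimensional Bessel bridge $R$ on $[0,1/2]$ for a $5$-dimensional Bessel process $X$, plus a Markov-type tail estimate on the complementary piece), this reduces in turn to the classical Ciesielski--Taylor-type companion estimate
\begin{equation*}
\limsup_{r \downarrow 0} \frac{1}{r^2 \log\log(1/r)} \int_0^\infty \mathbbm 1_{X_s < r}\,ds < \infty \quad \text{a.s.}
\end{equation*}
Once such a deterministic constant $\Gamma$ is produced, set $G' := \{t : \limsup_{r \downarrow 0} \lambda(B_r^{\mathbb D}(t))/h(r) \leq \Gamma\}$, so that $\lambda([0,1] \setminus G') = 0$, and apply Lemma \ref{Hausdorffmeasurescontrole}(i) to $A \cap G'$ with $b = \Gamma$, yielding $m^h_{\mathbb D}(A) \geq m^h_{\mathbb D}(A \cap G') \geq M_1 \Gamma^{-1} \lambda(A \cap G') = \kappa_2 \lambda(A)$, with $\kappa_2 := M_1/\Gamma$.

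The main obstacle is the matching LIL upper bound for the Bessel occupation time: Proposition \ref{TheoBessel} only rules out that the occupation be too \emph{small} at many dyadic scales simultaneously, whereas the lower bound on $m^h_{\mathbb D}$ genuinely needs the complementary control that $\lambda(B_r(t))/h(r)$ stays bounded as $r \downarrow 0$. This is a classical estimate in the spirit of \cite{6, 14} but it is the real work; once it is in hand, the transfers to the Bessel bridge, from the fixed root $0$ to $\lambda$-almost every boundary point via rerooting and Fubini, and the two applications of Lemma \ref{Hausdorffmeasurescontrole} are routine.
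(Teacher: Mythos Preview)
Your proposal is correct and follows essentially the same route as the paper: the upper bound via the preceding Proposition plus Lemma \ref{Hausdorffmeasurescontrole}(ii), and the lower bound via the Ciesielski--Taylor occupation-time LIL for the $5$-dimensional Bessel process (which the paper cites as Theorem 3 in \cite{6}), transferred to the bridge by the absolute-continuity argument of Corollary \ref{EstimatesVolumeBalls}, spread to $\lambda$-a.e.\ boundary point by rerooting, and fed into Lemma \ref{Hausdorffmeasurescontrole}(i). The LIL upper bound you flag as ``the real work'' is indeed the key external input, but the paper simply invokes it as a known result rather than reproving it.
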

\begin{proof}[Proof]
	Let $A$ be a Borel set in $[0, 1]$. For the upper bound, consider:
	\begin{equation*} 
		A'= A\setminus \left\{t\in [0, 1]\ : \ \limsup_{r\downarrow 0^+}\frac{ \lambda(B_r^\mathbb D(t ))}{h(r)}<\frac{\gamma}{12}\right\}.
	\end{equation*} 
	Using the previous proposition we have for every $x\in A'$ that $m^h_\mathbb D( A')= m^h_\mathbb D(A)$. Moreover by construction, we have $\limsup_{r\downarrow 0^+}\frac{ \lambda(B_r^\mathbb D(t ))}{h(r)}\geq \frac{\gamma}{12}$. By a direct application of Lemma \ref{Hausdorffmeasurescontrole}, (ii), there exists a constant $\kappa_2$ independent of $A$ such that $m^h_\mathbb D(A)=m^h_\mathbb D(A')\leq \kappa_2\lambda(A')\leq \kappa_2\lambda(A)$.\\
	
	For the lower bound, note that there exists a constant $c>0$ such that \begin{equation*}
		\limsup_{r\to 0^+}\ h(r)^{-1}\int_0^1 \mathbbm 1_{\sqrt 3\cdot R_s<r} \mathrm ds\leq c.
	\end{equation*} To see this, note that this result is well known when one replaces $R$ by the Bessel process $X$, see for instance Theorem $3$ in \cite{6} which provides an explicit value for this $\limsup$. Then we just have to use the absolute continuity argument of  the proof of Corollary \ref{EstimatesVolumeBalls}. The last display shows that almost surely $\limsup_{r\downarrow 0}\frac{\lambda(B_r^\mathbb D(0))}{h(r)}\leq c$ and by the rerooting property of Lemma \ref{rerooting}, it follows that almost surely we have:
	\begin{equation*}
		\lambda\left(\left\{x\in [0, 1] \ : \ \limsup_{r\to \infty}\frac{\lambda(B_r^\mathbb D(x))}{h(r)}>c\right\}\right)=0.
	\end{equation*}  Now let:
	\begin{equation*}
		A''=A\setminus\left\{x\in [0, 1]\ : \ \limsup_{r\to \infty}\frac{\lambda(B_r^\mathbb D(x))}{h(r)}>c\right\},
	\end{equation*}
	we have $\lambda(A'')=\lambda(A)$ and $\limsup_{r\downarrow0} \frac{\lambda (B_r^{\mathbb D}(x))}{h(r)}\leq c$ for every $x\in A''$. By a direct application of Lemma \ref{Hausdorffmeasurescontrole}, (i) it follows that $\lambda(A)=\lambda(A'')\leq \kappa_1m^h_\mathbb D(A'')\leq \kappa_1m^h_\mathbb D(A)$ with a constant $\kappa_1>0$ independent of $A$.
\end{proof}
\noindent We finally use Proposition \ref{deterBrownPlane} to show that $m^h_{\mathbb D}=\kappa \lambda$ for some deterministic $\kappa>0$. 
\begin{theorem}
	There exists a constant $\kappa\in[\kappa_1, \kappa_2]$ such that $m^h_\mathbb D=\kappa\lambda$ almost surely.
\end{theorem}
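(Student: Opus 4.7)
The plan is to identify the Radon--Nikodym density of $m^h_\mathbb D$ with respect to $\lambda$ and to show that it is almost everywhere equal to the deterministic constant $\kappa$ produced by Proposition~\ref{deterBrownPlane}. The previous corollary supplies the mutual absolute continuity of $m^h_\mathbb D$ and $\lambda$, so there exists a density $f:[0,1]\to \mathbb R_+$ with $m^h_\mathbb D=f\,\mathrm d\lambda$ and $f$ bounded between $\kappa_1$ and $\kappa_2$ almost everywhere; it therefore remains only to show $f\equiv\kappa$.

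First I would apply the Lebesgue differentiation theorem to the finite Borel measure $m^h_\mathbb D$, which gives, for $\lambda$-almost every $u\in [0,1]$,
\begin{equation*}
\lim_{\varepsilon\downarrow 0}\frac{m^h_\mathbb D([u, u+\varepsilon])}{\varepsilon}= f(u).
\end{equation*}
Next I would fix $u\in[0,1]$ and use rerooting. Since $m^h_\mathbb D$ depends only on the metric space $(\partial\mathbb D, D|_{\partial\mathbb D})$, the quantity $\limsup_{\varepsilon\downarrow 0}\varepsilon^{-1} m^h_\mathbb D(\Gamma([0,\varepsilon]))$ is a measurable functional of $(\mathbb D, D, \mathbf V, \Gamma)$. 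Applying this functional to $(\mathbb D, D, \mathbf V, \Gamma^{[u]})$ yields exactly $\limsup_{\varepsilon\downarrow 0}\varepsilon^{-1} m^h_\mathbb D([u, u+\varepsilon])$, using that $\Gamma^{[u]}([0,\varepsilon])=\Gamma([u,u+\varepsilon])$ modulo $1$. By Lemma~\ref{rerooting} its law coincides with that of the corresponding quantity at the origin, which by Proposition~\ref{deterBrownPlane} equals $\kappa$ almost surely. A Fubini argument on $[0,1]\times\Omega$ then promotes this pointwise-in-$u$ identity to: almost surely, $\limsup_{\varepsilon\downarrow 0}\varepsilon^{-1}m^h_\mathbb D([u,u+\varepsilon])=\kappa$ for $\lambda$-a.e. $u\in[0,1]$.

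Combining the two ingredients above (the Lebesgue differentiation identity produces a bona fide limit, hence coinciding with its limsup) yields $f(u)=\kappa$ for $\lambda$-a.e. $u$, almost surely, and therefore $m^h_\mathbb D=\kappa\lambda$ almost surely. The bounds on $f$ coming from the previous corollary then place $\kappa$ in the interval $[\kappa_1,\kappa_2]$.

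The argument is short once the previous ingredients are in hand; the only step requiring some care is the rerooting transport. I would have to check explicitly that the boundary-growth functional $\limsup_{\varepsilon \downarrow 0}\varepsilon^{-1} m^h_\mathbb D(\Gamma([0,\varepsilon]))$ is measurable with respect to the curve-decorated metric space $(\mathbb D, D, \mathbf V, \Gamma)$ alone, so that Lemma~\ref{rerooting} can be applied. This rests on the intrinsic nature of the Hausdorff measure $m^h_\mathbb D$ with respect to the metric on $\partial\mathbb D$, independently of any specific parametrization, which is immediate from its definition.
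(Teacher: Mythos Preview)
Your proposal is correct and follows essentially the same approach as the paper: use the previous corollary for mutual absolute continuity, invoke Lebesgue differentiation to identify the density as $\lim_{\varepsilon\downarrow 0}\varepsilon^{-1}m^h_\mathbb D([u,u+\varepsilon])$ for $\lambda$-a.e.\ $u$, then combine rerooting (Lemma~\ref{rerooting}) with Proposition~\ref{deterBrownPlane} to force this limit to be the deterministic constant $\kappa$. If anything, your write-up is slightly more explicit than the paper's about the Fubini step and about why the boundary-growth functional is intrinsic to the curve-decorated metric structure.
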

\begin{proof}[Proof]
	By the previous proposition we see that the measures $\lambda$ and $m^h_\mathbb D$ are mutually absolutely continuous, with Radon Nikodym derivative $\frac{\mathrm dm^h_\mathbb D}{\mathrm d\lambda}$ taking values in $[\kappa_1, \kappa_2]$. Let us write $\psi(s)=m^h_\mathbb D([0, s])$. Standard results on cumulative distributive functions show that $\psi$ is differentiable $\lambda$-a.e. and that $\psi'(s)=\frac{\mathrm dm^h_\mathbb D}{\mathrm d\lambda}(s)$ $\lambda$-a.e.. From the rerooting property of Lemma \ref{rerooting}, we get that  $\psi'(x)$ and $\psi'(0)$ have the same law for every $x\in [0, 1]$. But we have a.s.
	\begin{equation*}
		\psi'(0)=\limsup_{\epsilon\downarrow 0^+} \frac{m^h_\mathbb D([0, \epsilon])}{\epsilon}=\kappa,
	\end{equation*}
	where $\kappa$ is the constant introduced in Proposition \ref{deterBrownPlane}. This means that, almost surely, $\lambda$-almost everywhere, $\frac{\mathrm dm^h_\mathbb D}{\mathrm d\lambda}=\psi'=\kappa$ with $\kappa \in [\kappa_1, \kappa_2]$. The result of the proposition follows.
\end{proof}

\section*{Acknowledgements}
I would like to express my sincere thanks to Jean-François Le Gall for all his suggestions and support throughout the writing of this work, and for his meticulous reading of the ---numerous--- previous versions of this document.

\end{document}